\newtheorem{theorem}{Theorem}{\bfseries}{\itshape}
\newtheorem{definition}{Definition}{\bfseries}{\itshape}
\newtheorem{lemma}{Lemma}{\bfseries}{\itshape}
\newtheorem{corollary}{Corollary}{\bfseries}{\itshape}
\newtheorem{observation}{Observation}{\bfseries}{\itshape}
\newtheorem{conjecture}{Conjecture}{\bfseries}{\itshape}
\newtheorem{claim}{Claim}{\bfseries}{\itshape}
{\bfseries}{\itshape}
\newtheorem{case}{Case}
\newtheorem{subcase}{Case}
\numberwithin{subcase}{case}
\newtheorem{subsubcase}{Case}
\numberwithin{subsubcase}{subcase}
\title{\textbf{Acyclic Edge Coloring of 3-sparse Graphs}}
\author[1]{\textbf{Nevil~Anto}\orcidlink{0000-0002-3379-1377}}
\author[1]{\textbf{Manu~Basavaraju}}
\author[1,2]{\textbf{Shashanka~Kulamarva}\orcidlink{0009-0002-2982-6044}}
\affil[1]{National Institute of Technology Karnataka, Surathkal-575025, India}
\affil[2]{School of Informatics, Kyoto University, Kyoto-6068501, Japan\protect\\ Email: \texttt{nevil.197cs005@nitk.edu.in, manub@nitk.edu.in, kulamarva.shashanka.3k@kyoto-u.ac.jp}}
\date{}
\begin{document}
	\maketitle
	\begin{abstract}
		\noindent A proper edge coloring of a graph without any bichromatic cycles is said to be an acyclic edge coloring of the graph. The \emph{acyclic chromatic index} of a graph $G$ denoted by $a'(G)$, is the minimum integer $k$ such that $G$ has an acyclic edge coloring with $k$ colors. Fiam\v{c}\'{\i}k conjectured that for a graph $G$ with maximum degree $\Delta$, $a'(G) \le \Delta+2$. A graph $G$ is said to be \emph{$3$-sparse} if every edge in $G$ is incident on at least one vertex of degree at most $3$. We prove the conjecture for the class of $3$-sparse graphs. Further, we give a stronger bound of $\Delta +1$, if there exists an edge $xy$ in the graph with $d_G(x)+ d_G(y) < \Delta +3$. When $\Delta >3$, the $3$-sparse graphs where no such edge exists is the set of bipartite graphs where one partition has vertices with degree exactly $3$  and the other partition has vertices with degree exactly $\Delta$.\\

		\noindent \textbf{Keywords}: \textit{Acyclic chromatic index; Acyclic edge coloring; $3$-sparse graphs; $3$-degenerate graphs}\\
		
		\noindent  \textbf{Mathematics Subject Classification}: 05C15
	\end{abstract}
	
	\section{Introduction}
	All the graphs considered in this paper are finite, simple, and undirected. Let $G=(V,E)$ be a graph. A \emph{path} in $G$ is a sequence of distinct vertices in $V$ such that there is an edge between every pair of consecutive vertices in the sequence. When the first and last vertices in a path in $G$ are joined by an edge, then the resulting structure is called a \emph{cycle} in $G$. Proper edge coloring is the assignment of a color to each edge of the graph such that no adjacent edge receives the same color. The \emph{chromatic index} of $G$ (denoted by $\chi'(G)$) is the minimum number of colors required for a proper edge coloring of $G$.  We say that a cycle in $G$ is \emph{bichromatic} with respect to some proper  edge coloring $c$ if the edges of the cycle are colored with exactly $2$ colors. A proper edge coloring of $G$ without bichromatic cycles is said to be an \emph{acyclic} edge coloring of $G$. The minimum number of colors required for an acyclic edge coloring of $G$ is said to be the \emph{acyclic chromatic index} of a graph $G$  (denoted by $a'(G)$). The concept of acyclic coloring was coined by \citet{Grunbaum1973ACP} and it was extended to edge coloring by \citet{Fiamvcik1978AC}. The vertex analog of the acyclic chromatic index is helpful in determining bounds for some other parameters, such as star chromatic number \cite{Fertin2004StarCol} and oriented chromatic number \cite{Kostochka1997AC} of a graph. Both of these parameters have various practical uses, including wavelength routing in optical networks \cite{Amar2001Ntwk}.
	
	By Vizing's Theorem \cite{Diestel2017GT}, for a graph $G$ with maximum degree $\Delta$, we have $\Delta \le \chi'(G) \le \Delta+1$. Since an acyclic edge coloring is also a proper edge coloring, we have $a'(G) \ge \chi'(G) \ge \Delta$. In the seminal paper, \citet{Fiamvcik1978AC} (and independently \citet{Alon2001ACI}) conjectured that the upper bound for the acyclic chromatic index of any graph is $\Delta+2$.
	
	\begin{conjecture}[\cite{Fiamvcik1978AC}]\label{conj:ACI}
		For a graph $G$ with maximum degree $\Delta$, $a'(G) \le \Delta+2$.
	\end{conjecture}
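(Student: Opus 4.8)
The plan is to attack the conjecture through the minimal-counterexample-plus-extension framework that underlies essentially all progress on this problem. Suppose for contradiction that $G$ is a counterexample with the fewest edges, so that every proper subgraph of $G$ admits an acyclic edge coloring with $\Delta + 2$ colors. The aim is then to exhibit a local configuration---an edge or a small neighborhood---that cannot occur in such a minimal $G$, because any acyclic $(\Delta+2)$-coloring of the graph obtained by deleting that configuration can always be recolored and extended to all of $G$. If one shows that \emph{every} graph must contain at least one such \emph{reducible configuration}, the minimal counterexample cannot exist and the bound $a'(G)\le \Delta+2$ follows for all graphs.

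First I would make the extension machinery precise. For an uncolored edge $uv$, a color is valid if it is (i)~proper, i.e.\ absent from the edges incident to $u$ and to $v$, and (ii)~does not complete a bichromatic cycle. The number of colors forbidden by (i) is governed by $d(u)+d(v)$, and those forbidden by (ii) by the color pairs already realized along short alternating paths between $u$ and $v$. A direct count shows that with $\Delta+2$ colors an edge $uv$ is immediately colorable whenever $d(u)+d(v)$ is small; the obstruction concentrates at edges both of whose endpoints have degree close to $\Delta$. The core of the argument is therefore an analysis of \emph{alternating (bichromatic) paths}: when no valid color is directly free, one performs a sequence of color swaps along such paths---an alternating-path recoloring---to liberate a color without creating a new bichromatic cycle elsewhere.

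The main obstacle, and the reason this conjecture remains open in full generality, is that the recoloring analysis fails to close for dense local structure. When $G$ is locally near-$\Delta$-regular the counting slack that makes low-degree edges easy vanishes: every candidate color may be blocked either properly or by a bichromatic cycle, and the swaps needed to free a color can themselves introduce new bichromatic cycles, producing an uncontrolled cascade. I would try to tame this with a discharging argument that assigns charge to the vertices and redistributes it so that any graph free of reducible configurations violates the global edge count; but the discharging rules only balance when the graph is sparse enough to exclude the bad dense configurations. To reach the general case one must combine this structural reduction with probabilistic bounds---Lov\'asz Local Lemma and entropy-compression arguments already give $a'(G)\le c\,\Delta$ for a constant $c>1$---and the genuinely hard step is bridging the gap between a constant-factor bound and the exact additive bound $\Delta+2$. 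No known technique supplies the required slack in the densest regime, which is exactly why restricting to sparse families---such as the $3$-sparse graphs studied here---is the natural setting in which the reducibility program can actually be carried to completion.
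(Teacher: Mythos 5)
There is a genuine gap, and it is fundamental: the statement you were asked about is Conjecture~\ref{conj:ACI}, an open problem, and neither this paper nor anyone else has a proof of it. The paper does not claim one --- it establishes the bound only for $3$-sparse graphs (Theorem~\ref{thm:ACI3sprs} and Corollary~\ref{cor:ACI3sprs}), and explicitly notes that the best known bound for general graphs is $3.569(\Delta-1)$. Your proposal is accordingly not a proof but a program outline, and you concede as much in your own final sentence (``no known technique supplies the required slack in the densest regime''). A proof cannot contain the admission that its central step is not known to go through. Concretely, two things are missing and are not minor: first, you never exhibit an unavoidable set of reducible configurations --- no charge assignment, no discharging rules, no verification that a configuration-free graph violates an edge count; second, you never prove reducibility of any configuration, i.e.\ you never show that the alternating-path swaps you invoke terminate without creating new bichromatic cycles. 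In the near-$\Delta$-regular regime this is precisely where every known attempt fails: with $\Delta+2$ colors and $d(u)+d(v)=2\Delta$, the handful of candidate colors surviving the properness constraint can each be killed by a critical path through a color in $F_{uv}\cap F_{vu}$ (this is the content of Lemma~\ref{lem:ColorValidity}), and there is no bound on $|F_{uv}\cap F_{vu}|$ to make the case analysis finite. Your appeal to Local Lemma and entropy-compression bounds of the form $c\Delta$ does not help, since no mechanism is offered for converting a multiplicative constant into the exact additive slack of $2$.

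It is worth seeing why the paper's argument, which superficially matches your minimal-counterexample-plus-recoloring template, actually closes while yours cannot: the $3$-sparseness hypothesis guarantees that the uncolored edge $xy$ has an endpoint of degree at most $3$, which forces $|F_{xy}\cap F_{yx}|\le 2$ and leaves at least $|S|\ge 2$ (resp.\ $3$) candidate colors in the critical cases; moreover every critical path blocking a candidate must pass through one of at most two specific neighbors $y_1,y_2$ of $y$, whose own degrees are again controlled by sparsity. This is what makes the swap-and-extend analysis a finite, checkable case distinction (Cases (a)--(c) on $|F_{xy}\cap F_{yx}|$, together with Lemma~\ref{lem:ge3} and the edge-degeneracy Lemma~\ref{lem:edge_degenerate} that lets the induction proceed on a connected subgraph). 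None of this structure is available in an arbitrary graph, which is exactly the obstruction you correctly diagnose but cannot remove. So the honest verdict is: your proposal accurately surveys the known strategy and its failure mode, but it proves nothing; the statement remains a conjecture, and the paper's actual theorems should be read as the special case where your program can be carried out because sparsity supplies the combinatorial slack.
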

	
	The conjecture remains unsolved for an arbitrary graph $G$, with the current best-known upper bound for $a'(G)$ being $3.569(\Delta-1)$, given by \citet{Fialho2020AECBound}. The fact that this bound is significantly distant from the conjectured bound highlights how challenging the problem is. However, the conjecture has been verified for certain special classes of graphs. \citet{Alon2001ACI} proved that there exists a constant $k$ such that $a'(G) \le \Delta+2$ for any graph $G$ with girth at least $k\Delta\log\Delta$.

    The acyclic chromatic index was exactly determined for some classes of graphs like series-parallel graphs when $\Delta \ne 4$ (\citet{Wang2011ACIK4MinorFree}), outerplanar graphs when $\Delta \ne 4$ (\citet{Hou2013AECOuterPlanarErr}, \citet{Hou2010AECOuterPlanar}), cubic graphs (\citet{Andersen2012AECCubic}), chordless graphs with $\Delta \ge 3$ (\citet{Basavaraju2023ACIChordless}), planar graphs with $\Delta \ge 4.2 \times 10 ^{14}$ (\citet{Cranston2019AECPlanar}) and planar graphs with girth at least 5 and $\Delta \ge 19$ (\citet{Basavaraju2011AECPlanar}). In the case of outerplanar graphs and series-parallel graphs, if $\Delta \ge 5$, then $a'(G)=\Delta$, and when $\Delta=3$, they \cite{Wang2011ACIK4MinorFree,Hou2010AECOuterPlanar} characterize the graphs that require $4$ colors for the acyclic edge coloring.
	
	A graph $G$ is said to be \emph{$k$-degenerate} if every subgraph of $G$ has a vertex of degree at most $k$. A graph $G$ is said to be \emph{$k$-sparse} if every edge in $G$ is incident on at least one vertex of degree at most $k$. One can see that the class of $k$-sparse graphs is a subclass of the class of $k$-degenerate graphs. \citet{Anto2024ACIDegenerate} proved that $a'(G) \leq \Delta + 5$ for $3$-degenerate graphs. In this paper, we study $3$-sparse graphs and prove Conjecture~\ref{conj:ACI} for $3$-sparse graphs. In particular, we have the following theorem.

	\begin{theorem}\label{thm:ACI3sprs}
		Let $G$ be a connected 3-sparse graph with maximum degree $\Delta$. If $G$ has an edge $xy$ with $d_G(x) + d_G(y) < \Delta + 3$, then $a'(G) \le \Delta+1$.
	\end{theorem}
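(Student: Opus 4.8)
The plan is to argue by induction on the number of edges, taking $G$ to be a minimal counterexample: a connected $3$-sparse graph of maximum degree $\Delta$ that has an edge $xy$ with $d_G(x)+d_G(y)\le\Delta+2$, yet $a'(G)>\Delta+1$. Since $G$ is $3$-sparse, one endpoint of $xy$, say $x$, has $p:=d_G(x)\le 3$; write $q:=d_G(y)$, so $p+q\le\Delta+2$. I would delete $xy$ to get $G'=G-xy$, which is again $3$-sparse with $\Delta(G')\le\Delta$, color $G'$ acyclically with at most $\Delta+1$ colors, and extend to $xy$. The inductive hypothesis stays available componentwise: any component of $G'$ that is \emph{not} one of the extremal bipartite graphs has a special edge (by the characterization of $3$-sparse graphs without one) and is colored by induction, and when $p\ge 2$ the component of $x$ is of this kind since $d_{G'}(x)+d_{G'}(u)\le(p-1)+\Delta(G')<\Delta(G')+3$ for any remaining edge $xu$. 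An extremal component is forced by $3$-sparseness to have maximum degree at most $\Delta-1$ — attaching the deleted edge at a degree-$3$ vertex of such a component would leave an edge with both ends of degree exceeding $3$ — so the $\Delta+2$ bound for $3$-sparse graphs colors it within $(\Delta-1)+2=\Delta+1$ colors. The case $p=1$ (pendant $xy$) and the small cases $\Delta\le 3$ form the base.

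The extension begins with a counting bound. To keep the coloring proper at $xy$ I must only avoid the colors of the $p-1$ edges at $x$ and the $q-1$ edges at $y$, that is at most $p+q-2\le\Delta$ colors; hence the set $S$ of proper-available colors satisfies $|S|\ge(\Delta+1)-(p+q-2)=\Delta+3-(p+q)\ge 1$. So a proper color always exists, and when $p+q\le\Delta+1$ there is genuine slack, $|S|\ge 2$. The remaining work is to choose a color of $S$ that closes no bichromatic cycle through $xy$.

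Here I would use that a bichromatic $(c,c')$-cycle created by setting $xy=c$ must enter $x$ along a $c'$-edge and enter $y$ along a $c'$-edge, so $c'$ lies in the intersection of the color sets at $x$ and at $y$, a set of size at most $p-1\le 2$. Thus at most two \emph{partner} colors $c'$ can ever be responsible for a bad cycle, and for each the offending path is forced (each of $c,c'$ occurs at most once per vertex), running from the unique $c'$-neighbour of $x$ to the unique $c'$-neighbour of $y$. The $3$-sparse hypothesis is the structural lever: along such a path no two consecutive internal vertices can both have degree exceeding $3$, since every edge meets a vertex of degree at most $3$. This alternation of high- and low-degree vertices is what lets one trace the forced paths and bound how many colors of $S$ they can spoil.

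The main obstacle is the tight regime $p=3$, $q=\Delta-1$, where $|S|$ may be only $1$ while both partner colors are simultaneously active, so the single proper-available color could a priori close a bichromatic cycle with each of them; a direct choice need not exist. I expect the proof to finish by a local recoloring — a Kempe-type exchange on one of the two forced bichromatic paths, or a recoloring of one of the two edges incident to $x$ — that liberates a valid color for $xy$. Verifying that such an exchange can always be performed without creating a new bichromatic cycle elsewhere, exploiting the degree-$\le 3$ vertex $x$ together with the high/low alternation along bichromatic paths, is the crux of the argument; the parallel but milder case $p=2$, $q=\Delta$ (a single active partner) should be a warm-up for it.
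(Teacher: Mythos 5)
Your skeleton matches the paper's: minimal counterexample, delete $xy$ with $d_G(x)\le 3$, color $G'=G\setminus xy$ by minimality, and observe that a candidate color for $xy$ can only be blocked by a partner color in $F_{xy}\cap F_{yx}$, a set of size at most $2$. But the proposal stops exactly where the proof begins. The entire content of the paper's argument is the catalogue of explicit recolorings (Kempe-type exchanges on the edges at $x$, at $y$, and at the neighbours of $y$) showing that whenever \emph{every} candidate color is blocked, the resulting rigid structure of critical paths can be broken; this occupies Lemma~4 and Cases (a)--(c) and is by far the hardest part. You write ``I expect the proof to finish by a local recoloring \dots\ is the crux of the argument'' --- that is an accurate description of what remains to be done, not a proof of it. As it stands there is a genuine gap: no argument is given that a liberating exchange always exists.

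Two further points where your analysis goes astray. First, your feared tight regime ($|S|=1$ with both partner colors active) cannot occur: $|S|\ge (\Delta+1)-|F_{xy}\cup F_{yx}| = \Delta+3-(d_G(x)+d_G(y))+|F_{xy}\cap F_{yx}|$, so more overlap buys more spare colors, and the paper's case split on $|F_{xy}\cap F_{yx}|\in\{0,1,2\}$ yields $|S|\ge 1,2,3$ respectively (indeed the case $|F_{xy}\cap F_{yx}|=2$ is the \emph{easiest} in the paper, dispatched in one paragraph). Second, the structural lever is not your ``no two consecutive high-degree internal vertices on a bichromatic path.'' The real dichotomy is on $d_G(y)$: when $d_G(y)>3$, $3$-sparseness forces every neighbour of $y$ to have degree at most $3$, so each $F_{yy_i}$ has at most two colors and only a handful of candidate colors can be blocked through $y_i$; the genuinely delicate situation is when \emph{both} endpoints of $xy$ have degree at most $3$, since then the neighbours of $y$ may have degree $\Delta$ and a single $F_{yy_1}$ can block all $t-1$ candidates --- this is the paper's Lemma~4, the longest part of the proof, which your plan does not isolate or address. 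Your inductive bookkeeping for the extremal bipartite components is workable (the paper instead packages this into an edge-degeneracy lemma), but that is peripheral; the missing recoloring analysis is the substance of the theorem.
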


	 Assuming Theorem~\ref{thm:ACI3sprs} holds, the set of $3$-sparse graphs yet to be verified to have $a'(G) \le \Delta+1$ is the  set of $3$-sparse graphs where every edge in the graph has one of its incident vertices with degree exactly $3$ and the other with degree exactly $\Delta$. When $\Delta >3$, the vertices with degree $3$ form an independent set in such graphs. Since the graph is $3$-sparse, the vertices with degree $\Delta$ also form an independent set when $\Delta >3$. Therefore, when $\Delta >3$, the set of $3$-sparse graphs excluded by Theorem~\ref{thm:ACI3sprs} is precisely the set of bipartite graphs where all the vertices with degree $3$ belong to one partition and all the vertices with degree $\Delta$ belong to the other partition. However, the following corollary shows that $\Delta+2$ colors are sufficient for acyclic edge coloring of any $3$-sparse graph, and therefore, Conjecture~\ref{conj:ACI} can be verified for all $3$-sparse graphs. Note that $K_{3,3}$, $K_4$ are $3$-sparse graphs with $\Delta =3 $ that have no edge $xy$ with $d_G(x) + d_G(y) < \Delta + 3$. Notice that $\Delta+2$ colors are required for acyclic edge coloring of $K_{3,3}$ and $K_4$. By the results of \citet{Andersen2012AECCubic} and \citet{Basavaraju2008AECSubcubic}, $a'(G) \leq 4$ when $\Delta \le 3$, unless $G$ is $K_4$ or $K_{3,3}$. Therefore, we can assume that $\Delta \ge 4$.

	\begin{corollary}\label{cor:ACI3sprs}
		If $G$ is a $3$-sparse graph, then $a'(G) \le \Delta+2$.
	\end{corollary}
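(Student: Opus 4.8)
The plan is to derive Corollary~\ref{cor:ACI3sprs} from Theorem~\ref{thm:ACI3sprs} by induction on the number of vertices (or edges) of the graph, treating the graphs excluded by the theorem as the only genuinely hard case. Let me think about how the two results fit together and where the real work lies.

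Let me reconsider. The corollary claims $\Delta+2$ for all 3-sparse graphs. The theorem already gives the stronger $\Delta+1$ whenever there is a "light" edge $xy$ with $d_G(x)+d_G(y)<\Delta+3$. So the only graphs not covered are those where every edge has degree sum at least $\Delta+3$. Since the graph is 3-sparse, every edge has a vertex of degree $\le 3$, so for such an edge $d_G(x)+d_G(y)\ge \Delta+3$ with say $d_G(x)\le 3$ forces $d_G(y)\ge \Delta$, i.e. $d_G(y)=\Delta$ and $d_G(x)=3$. These are exactly the bipartite graphs described in the excerpt.

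First I would handle the base cases. When $\Delta\le 3$, the results of Andersen and of Basavaraju–Narayanan give $a'(G)\le 4=\Delta+1\le\Delta+2$ except for $K_4$ and $K_{3,3}$, and for those two $\Delta+2=5$ colors suffice (indeed are required). So we may assume $\Delta\ge 4$, as noted. Now if $G$ has any light edge, Theorem~\ref{thm:ACI3sprs} immediately gives $a'(G)\le\Delta+1\le\Delta+2$. Hence the entire burden reduces to the excluded bipartite 3-sparse graphs $G$ with $\Delta\ge 4$, in which every degree-3 vertex sits in one part $A$ and every degree-$\Delta$ vertex in the part $B$.

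My approach for this remaining case is to reduce it to a graph that does have a light edge, color that smaller graph with $\Delta+2$ colors by induction (invoking Theorem~\ref{thm:ACI3sprs} or the inductive hypothesis of the corollary on the smaller graph), and then extend the coloring. The natural reduction is to pick a degree-3 vertex $x\in A$ with neighbors $y_1,y_2,y_3\in B$, delete $x$ (equivalently delete its three edges), color the resulting 3-sparse graph $G'$, and then color the three edges $xy_1,xy_2,xy_3$ back in. In $G'$ the degrees of $y_1,y_2,y_3$ drop to $\Delta-1$, so edges at these vertices typically become light, letting the smaller graph fall under Theorem~\ref{thm:ACI3sprs} or otherwise be colorable with $\Delta+2$ colors. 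The extension step is the heart of the argument: with $\Delta+2$ colors available and only three edges to insert at a single vertex, one has substantial slack. For each edge $xy_i$ one must avoid the (at most $\Delta-1$) colors already present at $y_i$, the colors on the previously-inserted $x$-edges (to keep the coloring proper at $x$), and any color that would close a bichromatic cycle through $x$.

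The step I expect to be the main obstacle is precisely this extension, namely ruling out bichromatic cycles when recoloring the three deleted edges. A bichromatic cycle through $x$ must use two of the three edges $xy_i,xy_j$ and a bichromatic $y_i$–$y_j$ path of alternating colors $\{c(xy_i),c(xy_j)\}$ in $G'$. The counting has to confirm that, for each pair, there is a choice of colors avoiding all such alternating paths; this is where one uses the abundance of colors ($\Delta+2$ against degree at most $\Delta$) together with a careful analysis of which color pairs can support an alternating path between two neighbors of $x$. If a direct color-avoidance count is too tight, the fallback is a short exchange/Kempe-swap argument: recolor along a maximal bichromatic path at some $y_i$ to free a color, a standard technique in acyclic edge coloring proofs. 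I would also need to double-check the boundary subtlety that deleting $x$ keeps $G'$ connected (or argue componentwise), and that $G'$ remains 3-sparse, both of which are immediate since 3-sparseness is closed under edge deletion and the inductive hypothesis can be applied to each component.
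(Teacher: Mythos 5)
Your reduction to the ``every edge joins a degree-$3$ vertex to a degree-$\Delta$ vertex'' case is fine, but the plan then rests entirely on an extension step that you describe rather than prove, and that step is not routine. After deleting a degree-$3$ vertex $x$ and coloring $G\setminus x$ with $\Delta+1$ colors via Theorem~\ref{thm:ACI3sprs}, you must reinsert three edges $xy_1,xy_2,xy_3$ using $\Delta+2$ colors in total. The slack is not ``substantial'': each $y_i$ already sees up to $\Delta-1$ colors, so the third edge to be inserted has, in the worst case, exactly \emph{one} proper candidate (avoid $\Delta-1$ colors at $y_3$ and the two colors already placed at $x$), and a proper candidate need not be valid --- it can close a bichromatic cycle through an alternating $y_i$--$y_j$ path. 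Ruling this out requires precisely the kind of multi-page critical-path and color-exchange analysis that constitutes the proof of Theorem~\ref{thm:ACI3sprs} itself; asserting that ``a short exchange/Kempe-swap argument'' will work as a fallback is not a proof. As written, the central claim of your argument is unestablished.

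The paper avoids all of this with a much lighter reduction: delete a \emph{single} edge $uv$ from $G$. The resulting graph is still $3$-sparse and now has an edge of degree sum less than $\Delta+3$ (any surviving edge at $u$ or $v$ has had its degree sum drop by one), so Theorem~\ref{thm:ACI3sprs} colors it with $\Delta+1$ colors. The deleted edge is then given a brand-new $(\Delta+2)$-nd color; since that color occurs on exactly one edge, it cannot lie on any bichromatic cycle (each color of such a cycle must appear at least twice), so no extension analysis is needed. If you want to salvage your vertex-deletion route you would have to actually carry out the case analysis for inserting the three edges; the one-edge-plus-fresh-color trick is the idea your proposal is missing.
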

	\begin{proof}
		We can see that if an edge is deleted from a $3$-sparse graph $G$, then the resulting graph is a $3$-sparse graph which contains an edge $xy$ with $d_G(x) + d_G(y) < \Delta +3 $ and hence, by Theorem~\ref{thm:ACI3sprs}, can be acyclically edge colored with $\Delta + 1$ colors. This trivially implies that $G$ can be acyclically edge colored with one additional color, as desired.

	\end{proof}
	
	\section{Preliminaries}
	Let $G=(V,E)$ be a graph with $n$ vertices and $m$ edges. The \emph{degree} of a vertex in a graph is the number of edges incident on that vertex. The degree of a vertex $v$ is represented as $d_G(v)$. We use notations $\delta(G)$ and $\Delta(G)$ to represent the minimum degree and the maximum degree of $G$ respectively. The \emph{edge degree} of an edge $e =uv$ is the number of edges incident on the endpoints of $e$ other than than $e$. Thus edge degree of $uv$ in $G$ =  $d_G(u) + d_G(v) -2 $. We say that a graph $G$ is \emph{$k$-edge-degenerate} if every subgraph of $G$  has an edge with edge degree at most $k$ or if every component in the subgraph is a trivial component. For any vertex $v \in V$, $N_G(v)$ is the set of all vertices in $V$ that are adjacent to the vertex $v$ in $G$. Throughout the paper, we ignore $G$ in these above notations whenever the graph $G$ is understood from the context.
	
	Let $X \subseteq E$ and $Y \subseteq V$, i.e., $X$ be a subset of $E$ and $Y$ be a subset of $V$. The subgraph of $G$ obtained by the vertex set $V$ and the edge set $E \setminus X$ is denoted as $G \setminus X$ and the subgraph of $G$ obtained by the vertex set $V \setminus Y$ and the edge set $E \setminus \{e \in E \mid \exists y \in Y \text{ such that } e \text{ is incident to } y\}$ is denoted as $G \setminus Y$. If either $X$ or $Y$ is a singleton set $\{u\}$, then we simply use $G \setminus u$ instead of $G \setminus \{u\}$. Further notations and definitions can be found in \cite{West2001IGT}. We use the word coloring instead of acyclic edge coloring at some obvious places when there is no ambiguity.

	First, we prove a lemma which is useful in our proof.
	
	\begin{lemma}\label{lem:edge_degenerate}
	Let $G$ be a connected $3$-sparse graph with maximum degree $\Delta$. Let $k = \Delta$. If $G$ has an edge $uv$ with edge degree at most $k$, then $G$ is $k$-edge degenerate.
	\end{lemma}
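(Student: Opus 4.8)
The plan is to argue by contradiction after unwinding the definition of $k$-edge-degeneracy. Since $k=\Delta$, it suffices to show that every subgraph $H$ of $G$ with $E(H)\ne\emptyset$ contains an edge whose edge degree, computed within $H$, is at most $\Delta$; a subgraph all of whose components are trivial has no edges and is covered by the second clause of the definition. So I would fix an arbitrary such $H$ and suppose, toward a contradiction, that every edge of $H$ has edge degree at least $\Delta+1$ in $H$, i.e.\ $d_H(a)+d_H(b)\ge \Delta+3$ for every $ab\in E(H)$.

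The heart of the argument is a local degree-pinning claim: under this assumption, both endpoints of every edge of $H$ are \emph{saturated}, meaning their degree in $H$ equals their degree in $G$, so that $H$ contains all of their incident $G$-edges. To establish this for an edge $xy\in E(H)$, I would first invoke the $3$-sparseness of $G$ to obtain an endpoint, say $x$, with $d_G(x)\le 3$, whence $d_H(x)\le 3$. The inequality $d_H(x)+d_H(y)\ge \Delta+3$ then forces $d_H(y)\ge \Delta$, and since $d_H(y)\le d_G(y)\le \Delta$ we get $d_H(y)=d_G(y)=\Delta$, so $y$ is saturated. Feeding $d_H(y)=\Delta$ back into the same inequality yields $d_H(x)\ge 3$, hence $d_H(x)=3=d_G(x)$ and $x$ is saturated as well.

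With saturation in hand, I would let $S$ be the set of non-isolated vertices of $H$. Every vertex of $S$ lies on an edge of $H$ and is therefore saturated, so all of its $G$-edges are present in $H$ and lead to vertices of $S$. Thus no edge of $G$ leaves $S$, i.e.\ $S$ is a union of connected components of $G$. Because $G$ is connected and $S\ne\emptyset$, this gives $S=V(G)$ and $E(H)=E(G)$, so $H=G$. But then every edge of $G$ has edge degree at least $\Delta+1$, contradicting the hypothesis that $G$ contains an edge $uv$ of edge degree at most $\Delta$. This contradiction shows that no such $H$ exists, proving that $G$ is $\Delta$-edge-degenerate.

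I expect the degree-pinning step to be where the real content sits: it is exactly where both the $3$-sparseness (to bound the smaller endpoint by $3$) and the maximum-degree constraint (to force $d_H(y)=\Delta$) are used, and one must verify that the two inequalities \emph{together} pin the endpoints to degrees exactly $3$ and $\Delta$ rather than merely bounding them. The connectivity step is then a clean finish, and it is precisely where the hypothesis on the existence of the low-edge-degree edge $uv$ is consumed: without it, the bipartite graphs with one part of degree $3$ and the other of degree $\Delta$ (which are themselves non-trivial subgraphs possessing no low-edge-degree edge) would be genuine counterexamples, so the hypothesis is what rules out the only obstruction.
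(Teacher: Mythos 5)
Your proof is correct and follows essentially the same route as the paper's: both pin the endpoints of every edge of the offending subgraph $H$ to degrees exactly $3$ and $\Delta$ (using $3$-sparseness together with the maximum-degree bound) and then use the connectivity of $G$ to exhibit an edge of $G$ leaving $H$, which is impossible. Your saturation/union-of-components phrasing cleanly absorbs the paper's separate case analysis on whether $H$ is connected, but the underlying argument is the same.
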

	
	\begin{proof}
	We have that the graph $G$ has an edge $uv$ with edge degree at most $k$. Suppose, for the sake of contradiction, assume that $G$ is not $k$-edge degenerate. Then, there exists some subgraph $H$ of $G$ with  $|E(H)| < | E(G)|$ and $H$ has no edge with an edge degree at most $k$. Therefore, every edge in $H$ has edge degree greater than $k$. Since $H$ is also $3$-sparse, note that edge degree of any edge is at most $\Delta + 1 = k+1$. Thus we can infer that every edge in $H$ has one of its endpoints with degree $3$ and the other endpoint with degree $\Delta(G) = k$.  
	
	Suppose $H$ is not connected and let $H_1$ be any non-trivial component in $H$. Then there exists a path $P\in E(G) \setminus E(H)$, connecting  $H_1$ and any component $H_2$ of $H$. Let $e \in E(P)$  be an edge which is incident on a vertex $x$ in $H_1$. Let $xz \in E(H_1$). Since edge degree of $xz >k$, $d_{H}(x) =  3$ or  $d_{H}(x) =  \Delta$. Since $\Delta$ is the maximum degree in $G$,  $d_{H}(x) \neq \Delta$. Otherwise, that would mean  $d_{G}(x) > \Delta  $. If $d_{H}(x) =  3$, then $d_{H}(z) =  \Delta$ and $d_{G}(x) \geq  4$. Presence of such an edge $xz$ in $G$ is a contradiction since $G$ is $3$-sparse. 

	Therefore, we may assume $H$ is connected. Let $e \in E(G) \setminus E(H)$ be an edge which is incident on a vertex $x$ in $H$. Since $G$ is connected, such an edge exists. Let $xz \in E(H$). Since edge degree of $xz >k$, $d_{H}(x) =  3$ or  $d_{H}(x) =  \Delta$. Since $\Delta$ is the maximum degree in $G$,  $d_{H}(x) \neq \Delta$. Otherwise, that would mean  $d_{G}(x) > \Delta  $. If $d_{H}(x) =  3$, then $d_{H}(z) =  \Delta$ and $d_{G}(x) \geq  4$. Presence of such an edge $xz$ in $G$ is a contradiction since $G$ is $3$-sparse. 
	 
	\end{proof}

	Further, the following definitions and lemmas are necessary for our proof. These were given by \citet{Basavaraju2010AEC2deg}.
	
	\begin{definition}[\cite{Basavaraju2010AEC2deg}]
		An edge coloring $f$ of a subgraph $H$ of a graph $G$ is called a \textbf{partial edge coloring} of $G$.
	\end{definition}

	Every graph is its own subgraph, making an edge coloring of a graph $G$ also a partial edge coloring of $G$. A partial coloring that obeys the rules of proper coloring is said to be proper. If bichromatic cycles are absent, a proper partial coloring is also acyclic. 

For an edge $e$ and partial coloring $f$, $f(e)$ may or may not be defined. So, whenever we use $f(e)$ for some edge $e$, we implicitly assume that $f(e)$ is defined. Given a partial edge coloring $f$ of $G$ and a vertex $u \in V$, we define $F_u(f) = \{f(uv) \mid v \in N_G(u)\}$. For any edge $xy \in E$, we define $F_{xy}(f) = F_y(f) \setminus \{f(xy)\}$. At certain places, we may choose not to specify the partial coloring $f$ if the context makes it sufficiently clear. For instance, with notations $F_u(f) $ and  $F_{xy}(f)$,  we may drop the name of the partial coloring $f$ from the brackets. In such situations, we will simply use  $F_u$ and $F_{xy}$ instead. It is easy to see that $F_{xy}$ and $F_{yx}$ are not the same. 
    	
	\begin{definition}[\cite{Basavaraju2010AEC2deg}]
A $(\mu,\nu)$-maximal bichromatic path with respect to a partial coloring $f$ of $G$ is a maximal path in $G$ consisting of edges that are colored using the colors $\mu$ and $\nu$ alternatingly. A $(\mu,\nu,x,y)$-\textbf{maximal bichromatic path} is a $(\mu,\nu)$-maximal bichromatic path that starts at the vertex $x$ with an edge colored with $\mu$ and ends at the vertex $y$.
	\end{definition}
	
	Now, we mention a lemma which follows from the definition of acyclic edge coloring. We assume this lemma implicitly further down the paper. This lemma was mentioned as a fact in \cite{Basavaraju2010AEC2deg}.
	
	\begin{lemma}[\cite{Basavaraju2010AEC2deg}]\label{lem:UniqueMaxBichPath}
Given a pair of colors $\mu$ and $\nu$ in a proper coloring $f$ of $G$, there is at most one $(\mu,\nu)$-maximal bichromatic path containing a particular vertex $y$ in $G$, with respect to $f$.
	\end{lemma}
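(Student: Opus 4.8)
The plan is to pass to the spanning subgraph $H$ of $G$ whose edge set consists precisely of those edges colored $\mu$ or $\nu$ under $f$, and to analyze its structure. The crucial observation is that since $f$ is a proper edge coloring, every vertex of $G$ is incident to at most one edge colored $\mu$ and at most one edge colored $\nu$. Consequently every vertex has degree at most $2$ in $H$, that is $\Delta(H) \le 2$.

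A graph in which every vertex has degree at most $2$ decomposes into connected components, each of which is an isolated vertex, a path, or a cycle; I would record this standard fact and apply it to $H$. In particular the vertex $y$ lies in exactly one connected component of $H$, and every edge of any alternating $(\mu,\nu)$-path through $y$ lies in that component.

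Next I would argue that any $(\mu,\nu)$-maximal bichromatic path containing $y$ must coincide with the component of $H$ that contains $y$. Such a path is by definition a connected alternating subgraph of $H$ passing through $y$, so it lies inside that component; and maximality, together with the degree bound $\Delta(H)\le 2$ which forbids branching, forces it to use every $\mu$- and $\nu$-edge reachable from $y$ within $H$. Hence it must be the whole component. If that component is a path, it is therefore the unique maximal bichromatic path through $y$; if the component is a cycle (or an isolated vertex, or a single edge endpoint with no such edges), then no maximal bichromatic \emph{path} through $y$ exists, since a path cannot contain a cycle. In every case at most one such path contains $y$, which is exactly the claim.

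This argument presents no genuine obstacle: the statement is essentially a structural consequence of properness. The only point meriting a little care is the maximality step, where one must confirm that a maximal alternating path cannot stop short of exhausting its component and cannot branch; both are ruled out precisely by the degree bound $\Delta(H)\le 2$ established at the outset.
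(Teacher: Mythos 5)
The paper offers no proof of this statement to compare against: it is imported verbatim as a known fact from \cite{Basavaraju2010AEC2deg}, with only the remark that it ``follows from the definition of acyclic edge coloring.'' Your argument --- pass to the subgraph $H$ spanned by the edges colored $\mu$ or $\nu$, observe $\Delta(H)\le 2$ from properness, decompose $H$ into paths and cycles, and identify any maximal $(\mu,\nu)$-bichromatic path through $y$ with the component of $y$ in $H$ --- is the standard justification, and it is correct whenever that component is a path or an isolated vertex, which is the only situation in which the lemma is ever invoked in this paper (every coloring to which it is applied is acyclic).

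The one step that does not hold up is your dismissal of the cycle case. You claim that if the component of $y$ in $H$ is a cycle then no maximal bichromatic path through $y$ exists ``since a path cannot contain a cycle.'' But a maximal path need not exhaust its component: under the paper's definition (a maximal path whose edges alternate between $\mu$ and $\nu$), deleting any single edge from a $(\mu,\nu)$-bichromatic cycle $C$ through $y$ leaves a path that cannot be extended --- each endpoint's only remaining $\mu/\nu$-edge leads back to a vertex already on the path --- and is therefore maximal. This produces $|C|$ distinct maximal bichromatic paths, every one of which contains $y$, so in the cycle case the conclusion of the lemma actually fails and no argument can rescue it. The correct resolution is that the lemma tacitly assumes $f$ admits no $(\mu,\nu)$-bichromatic cycle through $y$ (for instance, $f$ acyclic, as in the cited source and in every application here); under that hypothesis the component of $y$ in $H$ must be a path and your argument closes the proof. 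You should state that hypothesis explicitly rather than assert that the cycle case is vacuous.
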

	
	\begin{definition}[\cite{Basavaraju2010AEC2deg}]
		If the vertices $x$ and $y$ are adjacent in the graph $G$, then a  $(\mu,\nu,x,y)$-maximal bichromatic path in $G$, which ends at $y$ with an edge colored $\mu$, is said to be a $(\mu,\nu,xy)$-\textbf{critical path} in $G$.
	\end{definition}
	
	\begin{definition}[\cite{Basavaraju2010AEC2deg}]
		Let $f$ be a partial coloring of $G$. Let $w,x,y \in V$ and $wx,wy \in E$. A \textbf{color exchange} with respect to the edges $wx$ and $wy$ is defined as the process of obtaining a new partial coloring $g$ from the current partial coloring $f$ by exchanging the colors of the edges $wx$ and $wy$. The color exchange defines $g$ as follows. $g(wx)=f(wy)$, $g(wy)=f(wx)$ and the colors of all the other edges are same in both the partial colorings $f$ and $g$. The color exchange defined above is proper if the coloring obtained after the exchange is proper. The color exchange is said to be \textbf{valid} if the coloring obtained after the exchange is acyclic.
	\end{definition}

	\begin{definition}[\cite{Basavaraju2010AEC2deg}]
	    Let $f$ be a partial coloring of $G$. A \textbf{recoloring} $g$ of the partial coloring $f$ is defined as the process of obtaining a new partial coloring $g$ from the current partial coloring $f$ by assigning a different color to one or more edges. The recoloring is said to be proper (and valid) if the coloring $g$ obtained is proper (and acyclic).
	\end{definition}

	A color $\gamma$ is said to be a \emph{candidate} color for an edge $e$ in $G$ with respect to a partial coloring $f$ if none of the edges incident on $e$ are colored $\gamma$. A candidate color $\gamma$ is said to be \emph{valid} for an edge $e$ if assigning the color $\gamma$ to $e$ does not result in any new bichromatic cycle in $G$. \citet{Basavaraju2010AEC2deg} mentioned the following lemma as a fact since it is obvious.
	
	\begin{lemma}[\cite{Basavaraju2010AEC2deg}]\label{lem:ColorValidity}        Let $f$ be a partial coloring of $G$. A candidate color $\beta$ is not valid for an edge $e=(xy)$ if and only if there exists a color $\alpha \in F_{xy} \cap F_{yx}$ such that there is an $(\alpha,\beta,xy)$-critical path in $G$ with respect to the coloring $f$.
	\end{lemma}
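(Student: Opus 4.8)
The plan is to prove the two directions of the biconditional separately, using the fact that the partial coloring $f$ is already acyclic (which is implicit in the notion of validity) together with Lemma~\ref{lem:UniqueMaxBichPath}. The central observation I would rely on is that, since $f$ contains no bichromatic cycle, any bichromatic cycle created by assigning the candidate color $\beta$ to the edge $e=xy$ must use $e$ itself; and a bichromatic cycle through $e$ corresponds precisely to a critical path joining $x$ and $y$. Note that since $\beta$ is a candidate color for $e$, the edge $e$ carries no color under $f$, and no edge incident to $x$ or $y$ is colored $\beta$ under $f$.

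For the ``if'' direction, I would suppose there is a color $\alpha \in F_{xy}\cap F_{yx}$ admitting an $(\alpha,\beta,xy)$-critical path $P$. By definition $P$ is an $(\alpha,\beta)$-bichromatic path from $x$ to $y$ that begins at $x$ with an $\alpha$-colored edge and ends at $y$ with an $\alpha$-colored edge. Since $e$ is uncolored under $f$, we have $e \notin E(P)$, so $P$ together with $e$ forms a cycle $C$. Assigning $\beta$ to $e$ makes $C$ use only the colors $\alpha$ and $\beta$; the edges of $P$ incident to $x$ and to $y$ are colored $\alpha$ while $e$ is colored $\beta$, so the resulting coloring is still proper and $C$ is a genuine bichromatic cycle. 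Hence $\beta$ is not valid for $e$.

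For the ``only if'' direction, I would suppose $\beta$ is a candidate color that is not valid for $e$, so that assigning $\beta$ to $e$ creates a bichromatic cycle $C$. Because $f$ is acyclic, $C$ must contain $e$, and its two colors are $\beta$ (the color of $e$) and some second color $\alpha$. By the alternation of colors along $C$, the two edges of $C$ incident to $x$ and to $y$ other than $e$ are colored $\alpha$; this exhibits $\alpha$ at both $x$ and $y$ on edges different from $e$, giving $\alpha \in F_{xy}\cap F_{yx}$. Deleting $e$ from $C$ leaves an $(\alpha,\beta)$-bichromatic path $P$ from $x$ to $y$ that begins and ends with an $\alpha$-colored edge. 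It then remains to check that $P$ is maximal with respect to $f$: extending $P$ past $x$ or past $y$ would require a $\beta$-colored edge incident to $x$ or $y$, but none exists because $\beta$ is a candidate color for $e$. Thus $P$ is an $(\alpha,\beta,xy)$-critical path, as required.

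The only delicate points are the maximality argument in the second direction and the guarantee that $C$ passes through $e$; both follow cleanly from the candidate-color hypothesis and the acyclicity of $f$, with Lemma~\ref{lem:UniqueMaxBichPath} ensuring that the bichromatic path through any vertex is unique and hence that $P$ is well defined. I therefore expect no substantial obstacle: the statement is essentially a restatement of what it means for a single additional edge to close a bichromatic cycle, and the proof is short.
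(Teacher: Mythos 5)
Your argument is correct, but note that the paper itself offers no proof to compare against: Lemma~\ref{lem:ColorValidity} is quoted from \citet{Basavaraju2010AEC2deg} and explicitly described as a fact that is ``obvious,'' so you are supplying a justification the authors chose to omit. Your two-direction argument is the standard and complete one: the forward direction closes the critical path with the newly colored edge $xy$ into a two-colored cycle, and the converse direction peels $xy$ off a new bichromatic cycle and checks that the remaining $(\alpha,\beta)$-path is maximal because no edge at $x$ or $y$ carries $\beta$ (this is exactly where the candidate-color hypothesis is used, and you identify it correctly).

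Two minor points of precision, neither of which is a gap. First, the reason the offending cycle $C$ must contain $e$ is not really that $f$ is acyclic but that validity is defined in terms of \emph{new} bichromatic cycles: since only the color of $e$ changes, any cycle that becomes bichromatic must pass through $e$. (In the paper's applications $f$ is always acyclic anyway, so the distinction is immaterial there.) Second, the appeal to Lemma~\ref{lem:UniqueMaxBichPath} at the end is unnecessary; uniqueness of the maximal bichromatic path plays no role in either direction, and the well-definedness of $P$ follows simply from $P = C \setminus \{e\}$. With those cosmetic adjustments your write-up is a clean, self-contained proof of the cited fact.
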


	\section{Proof of Theorem~\ref{thm:ACI3sprs}}
	
	\renewcommand\qed{$\hfill\square$}

	\begin{proof}
	
	Let $G$ be a minimum counterexample to Theorem~\ref{thm:ACI3sprs}, with respect to the number of edges. It is given that $G$ has an edge $xy$ with $d_G(x) + d_G(y) <\Delta + 3$. Taking $t = \Delta$, we can infer from Lemma~\ref{lem:edge_degenerate} that $G$ is $t$-edge degenerate. We shall try to prove that $G$ can be acyclically edge colored using $(t+1)$ colors.

		Let the number of vertices and the number of edges of $G$ be represented by $n$ and $m$ respectively. As $G$ is $3$-sparse, either $d_G(x) \le 3$ or $d_G(y) \le 3$. Without loss of generality, assume that $d_G(x) \le 3$. Let $G' =  G \setminus xy$, i.e., a graph formed by deleting the edge $xy$ from $G$. Suppose $G'$ is not connected. Since $G$ is a minimum counter example and $E(G') < E(G)$,  each of the two connected components can be colored with at most $t+1$ colors.  Recall that $F_{yx}$ and $F_{xy}$ denote the set of colors on the edges incident on $x$ and $y$ in $G'$ respectively. As the edge degree of $xy$ is at most $t$, there is at least one candidate color that can be assigned to the edge $xy$ to get a proper edge coloring of $G$. Since removal of the edge $xy$ disconnected the graph, the assignment of color to edge $xy$ can not create any bichromatic cycles. Hence coloring is acylic also. Therefore we may assume that $G'$ is connected. By Lemma~\ref{lem:edge_degenerate}, $G'$ is also a $3$-sparse graph with an edge having edge degree at most $t$. Note that $G'$ has less than $m$ edges and since $G$ is a minimum counterexample, $G'$ has an acyclic edge coloring $g$ with at most $t+1$ colors. Let $S$ denote the set of candidate colors for the edge $xy$, i.e., colors not in $F_{xy} \cup F_{yx}$. 
		
\begin{observation}
If a color $\mu_k \in S$ is valid for the edge $xy$, then we can get a valid coloring for the minimum counterexample $G$. Therefore, no color in $S$ is valid for the edge $xy$ in $G$.
\end{observation}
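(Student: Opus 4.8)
The plan is to exploit directly the definitions of candidate color and valid color together with the minimality of $G$. Suppose, for contradiction, that some color $\mu_k \in S$ were valid for the edge $xy$ with respect to the coloring $g$ of $G'$. I would extend $g$ to a coloring $h$ of the whole graph $G$ by setting $h(xy) = \mu_k$ and leaving every other edge colored as in $g$. The goal is to show that $h$ is an acyclic edge coloring of $G$ using at most $t+1 = \Delta+1$ colors, contradicting the assumption that $G$ admits no such coloring.

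First I would check that $h$ is proper. Since $\mu_k \in S$, by definition $\mu_k \notin F_{xy} \cup F_{yx}$, so no edge incident on $x$ or on $y$ in $G'$ carries the color $\mu_k$. Hence assigning $\mu_k$ to $xy$ creates no pair of adjacent edges with the same color, and $h$ is a proper edge coloring of $G$. Moreover $\mu_k$ is one of the $t+1$ colors already available, so $h$ uses at most $t+1$ colors.

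Next I would verify that $h$ is acyclic. The coloring $g$ is acyclic on $G'$, so $G'$ contains no bichromatic cycle under $g$, and $h$ agrees with $g$ on every edge of $G'$. Therefore any bichromatic cycle present in $G$ under $h$ must use the only newly colored edge, namely $xy$; otherwise it would already be a bichromatic cycle of $G'$ under $g$. But the hypothesis that $\mu_k$ is valid for $xy$ means precisely that assigning $\mu_k$ to $xy$ produces no new bichromatic cycle. Thus $h$ has no bichromatic cycle at all, so $h$ is an acyclic edge coloring of $G$ with at most $t+1$ colors.

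This contradicts the choice of $G$ as a minimum counterexample to $a'(G) \le \Delta+1$, and therefore no color $\mu_k \in S$ can be valid for $xy$, which is the claim. The argument presents no real obstacle; it is a direct unpacking of the definitions. The only point worth stating explicitly is that the acyclicity of $g$ forces every candidate bichromatic cycle in $h$ to pass through the single recolored edge $xy$, so that validity of $\mu_k$ alone suffices to rule all of them out.
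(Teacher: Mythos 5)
Your proposal is correct and matches the paper's (implicit) reasoning exactly: the paper states this observation without a separate proof precisely because it is the direct unpacking of the definitions that you carry out --- properness from $\mu_k \notin F_{xy} \cup F_{yx}$, acyclicity from the validity of $\mu_k$ plus the fact that any bichromatic cycle in the extension must pass through the newly colored edge $xy$, and the contradiction with $G$ being a minimum counterexample. Nothing is missing.
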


We first prove that in the minimum counter example $G$, both $x$ and $y$ can not have degree  at most $3$ at the same time.

		\begin{lemma}\label{lem:ge3}
			$G$ being a minimum counterexample, can not have $d_G(x) \leq 3$ and $d_G(y) \leq 3$.
		\end{lemma}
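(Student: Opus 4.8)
The plan is to derive a contradiction from the assumption that both $d_G(x)\le 3$ and $d_G(y)\le 3$. First I would dispose of the degenerate possibilities: if either endpoint had degree $1$ in $G$, then deleting $xy$ would isolate it and disconnect $G'$, a situation already handled before the statement. Hence I may assume $d_G(x),d_G(y)\in\{2,3\}$, so that the edge degree of $xy$ is at most $4$ and, writing $t=\Delta$, the candidate set satisfies $|S|=(t+1)-|F_{xy}\cup F_{yx}|\ge \Delta-3\ge 1$ since $\Delta\ge 4$. By the Observation above, no colour of $S$ is valid for $xy$, so Lemma~\ref{lem:ColorValidity} gives, for each $\beta\in S$, a colour $\alpha\in F_{xy}\cap F_{yx}$ together with an $(\alpha,\beta,xy)$-critical path in $G'$.

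The first dichotomy is on $|F_{xy}\cap F_{yx}|$. If this set is empty, then no candidate colour can be blocked, so every colour of $S$ is valid and $xy$ can be coloured, contradicting minimality; this settles the case immediately. Otherwise $F_{xy}\cap F_{yx}$ has one or two colours. Fix such an $\alpha$, and let $x_\alpha$ be the unique $\alpha$-neighbour of $x$ in $G'$. Every $(\alpha,\beta,xy)$-critical path begins with the edge $xx_\alpha$ and then leaves $x_\alpha$ along the $\beta$-coloured edge at $x_\alpha$; since the colouring is proper, distinct blocked colours $\beta$ use distinct edges at $x_\alpha$, so $\alpha$ blocks at most $d(x_\alpha)-1$ candidates. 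As every colour of $S$ must be blocked by one of the at most two colours of $F_{xy}\cap F_{yx}$, and $|S|$ is close to $\Delta$, this forces the relevant $\alpha$-neighbour(s) of $x$, and symmetrically of $y$, to have large degree.

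With this structure in hand I would try to kill all the blocking paths at once. Because all the critical paths using a given $\alpha$ share the edge $xx_\alpha$, recolouring $xx_\alpha$ (or performing a colour exchange between the two edges at $x$) destroys all of them simultaneously; one then recolours $xy$ with a candidate that has become valid. Checking that such a recolouring is admissible is easy in principle: any bichromatic cycle through the recoloured edge must also use the other edge at $x$ (the edge $xy$ being still uncoloured), so acyclicity reduces to avoiding a single two-coloured path, and properness reduces to avoiding the colours seen at $x_\alpha$ and at the other neighbour of $x$. When $x_\alpha$ has moderate degree these constraints leave a usable colour and the argument closes.

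The hard part is the \emph{saturated, symmetric} configuration forced by the counting above: $x_\alpha$ (and, by symmetry, the corresponding $\alpha$-neighbour of $y$) has degree exactly $\Delta$ and is incident to edges of every colour but one, so that no proper recolouring of $xx_\alpha$ exists and the natural colour exchange at $x$ leaves a vertex still meeting every candidate colour. To break the deadlock I would invoke $3$-sparseness: since $x_\alpha$ has degree greater than $3$, each of its other neighbours has degree at most $3$, so every critical path passes, one step past $x_\alpha$, through a low-degree vertex where there is room to recolour or exchange an edge and sever the bichromatic path without creating a new bichromatic cycle. Making this final recolouring precise, and verifying acyclicity simultaneously for all the critical paths in this dense symmetric case, is the principal technical obstacle; everything else is bookkeeping with the candidate count and Lemma~\ref{lem:ColorValidity}.
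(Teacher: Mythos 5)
Your overall strategy matches the paper's: split on $|F_{xy}\cap F_{yx}|$, dispose of the empty case at once, use Lemma~\ref{lem:ColorValidity} to produce critical paths for every blocked candidate, count them through the $\alpha$-neighbours, and in the saturated configuration recolour one step further out. However, what you have written is a plan rather than a proof, and the two places where it is vague are exactly where the lemma's content lives. First, your counting step is too weak: with $|F_{xy}\cap F_{yx}|=2$ and $d_G(x)=d_G(y)=3$ you have $|S|=\Delta-1$, while each of the two shared colours can block up to $d(x_\alpha)-1\le\Delta-1$ candidates, so the inequality $2(\Delta-1)\ge\Delta-1$ is vacuous and does \emph{not} force the saturated symmetric configuration. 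The paper instead pins down the structure by making a \emph{sequence} of failed extension attempts -- the original colouring $g$, then the exchange at $y$ ($f_0$), then the exchange at $x$ ($f_1$), then the exchange at both ($f_2$) -- and combining the path counts from each failure via Lemma~\ref{lem:UniqueMaxBichPath} to conclude there are exactly $t-1$ bichromatic paths of each type through each of $x_1,x_2,y_1,y_2$, which is what yields $F_{xx_1}=F_{xx_2}=S$ and $F_{y_1z_i}=F_{y_2u_i}=\{1,2\}$ for every second-level neighbour. Your single-shot count cannot reach that conclusion.

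Second, you explicitly defer ``making this final recolouring precise'' as ``the principal technical obstacle,'' but that obstacle \emph{is} the proof: the paper resolves it with a concrete simultaneous exchange ($f_3$) of the colours $\mu_1,\mu_2$ at both $y_1$ and $y_2$, whose validity rests precisely on the previously established fact that $\{1,2\}\not\subseteq F_{yy_1}\cup F_{yy_2}$ and that every $z_i,u_i$ sees only $\{1,2\}$ besides its $\mu$-edge. You also do not treat the case $|F_{xy}\cap F_{yx}|=1$ at all, which in the paper is the longest part of the argument (recolourings $h_1$ through $h_{15}$) and is handled by a different device, namely repeatedly recolouring edges at $y$ or $x$ so as to \emph{increase} the intersection to $2$ and reduce to the previous case -- an idea absent from your sketch. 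So while your intuition about where $3$-sparseness enters (the second-level neighbours of a degree-$\Delta$ vertex must have degree at most $3$) is correct and is indeed how the paper's Claim~2 closes, the proposal as it stands has a genuine gap: the forcing argument and the final recolouring are asserted rather than carried out.
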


		\begin{proof}
    		Suppose  $d_G(x) \leq 3$ and $d_G(y) \leq 3$. If $d_G(x) =2$, let $x_1$ be the other neighbor of $x$, besides $y$. Similarly let $y_1$ be the other neighbor of $y$ if $d_G(y) =2$. If $d_G(x) =3$, let $x_1$ and $x_2$ be the other neighbors of $x$  besides $y$. Similarly, let $y_1$ and $y_2$ be the other neighbors of $y$, besides $x$,  when $d_G(y) =3 $. Depending on the value of $|F_{xy} \cap F_{yx}|$, we have the following cases:
    		
 			\begin{case}
     			$|F_{xy} \cap F_{yx}| = 0$
 			\end{case}\label{case:atmost3_intersection0}

There exists at least one candidate color $\mu_i \notin F_{xy} \cup F_{yx}$ that can be assigned to the edge $xy$ to get a proper coloring of $G$. Since $|F_{xy} \cap F_{yx}| = 0$, this assignment of a color to $xy$ doesn't produce any  bichromatic cycles. Hence the coloring is valid.
			\begin{case}\label{case:atmost3_intersection2}
			    $|F_{xy} \cap F_{yx}| = 2$
			\end{case}

			Let $g(xx_1) = g(yy_1) = 1$ and $g(xx_2) = g(yy_2) =2$. Let the set of candidate colors for $xy$ be $S = \{ \mu_1, \mu_2, ..., \mu_{t-1} \}$. Based on the presence of colors $1$ and $2$ in $F_{yy_1}$ and $F_{yy_2}$, we have the following subcases:
			
			\begin{subcase}
			    $1 \in F_{yy_2} $ and $2 \in F_{yy_1}$. 
			\end{subcase}
			
	Since $|S| = t-1$, we can infer that there exists a color $\mu_i \notin F_{yy_1}$ and a color $ \mu_j \notin F_{yy_2}$, where $\mu_i, \mu_j \in S$. A color $\mu_k \in S$ is invalid for the edge $xy$ only if there exists a $( 1,\mu_k, xy)$-critical path or $( 2, \mu_k, xy)$-critical path or both. Since $\mu_i \notin F_{yy_1}$, there exists a $ ( 2, \mu_i, xy)$-critical path.  We define a recoloring $f_0$ of $g$: The edge $yy_1$ is assigned the color $\mu_i$. By Lemma~\ref{lem:UniqueMaxBichPath}, there can exist at most one  $(2, \mu_i)$-maximal bichromatic path through vertex $y$. As we have seen, the $(2, \mu_i)$-maximal bichromatic path that passes through $y$ ends at $x$. Therefore, the recoloring is valid. Assigning the edge $xy$ the color $\mu_j$ gives a valid coloring for $G$ because $ \mu_j \notin F_{yy_2}$. This is a contradiction to our assumption that $G$ is a minimum counterexample.

\begin{subcase}
     ($1 \in F_{yy_2} $ and $2 \notin F_{yy_1}$) or  ($1 \notin F_{yy_2} $ and $2 \in F_{yy_1}$) 
 \end{subcase}
 Since both cases are symmetrical, we will prove the case where $1 \in F_{yy_2} $ and $2 \notin F_{yy_1}$.
 Since $1 \in F_{yy_2} $, there exists a color $\mu_i \notin F_{yy_2}$. As $\mu_i$ is not a valid color for the edge $xy$, a $(1, \mu_i, xy)$-critical path exists. Also, $\mu_i \in F_{xx_1}$. We define a recoloring $f_0$ of $g$: edge $yy_2$ is recolored with color $\mu_i$. Notice that a $(1,\mu_i$)-bichromatic cycle is not created in $f_0$. This is because by   Lemma~\ref{lem:UniqueMaxBichPath}, there exists no $(1, \mu_i, yy_2)$-critical path in $f_1$ since there exists a $(1, \mu_i)$-maximal bichromatic path passing through $y$ and ending at $x$. Therefore, the coloring is valid. We can attempt to extend the color $f_0$ to get a valid coloring for $G$ by assigning one of the colors in $S \setminus \mu_i$ to the edge $xy$. If none of these colors are valid, then there exists a $( 1,\mu_k, xy)$-critical path, for every $\mu_k \in S \setminus \mu_i$ with respect to $f_0$. Since $\mu_i \in F_{xx_1}$  also, we can conclude that $F_{xx_1} = S $. Note that, $2 \notin F_{xx_1}$.

 Suppose $1 \notin F_{xx_2}$. We define the following recoloring $f_1$ of $g$: The colors on the edges $xx_2$ and $xx_1$ are exchanged i.e, the edge $xx_1$ is assigned the color $2$ and the edge $xx_2$ is assigned the color $1$. Clearly, the coloring is valid because $1 \notin F_{xx_1}$ and $2 \notin F_{xx_2}$. Assigning color $\mu_i$  to the edge $xy$ gives a valid coloring for $G$. This is because the $(1, \mu_i, x, y)$-maximal bichromatic path in $g$ goes through $x_1$ and  $\mu_i \notin F_{yy_2}$ . By Lemma~\ref{lem:UniqueMaxBichPath}, at most one $(1, \mu_i)$-maximal bichromatic path can go through $y$.
 
  Therefore, we can assume that $1 \in F_{xx_2}$. Then there exists a color $\mu_j \notin F_{xx_2}$. We can define the following recoloring $f_2$ of $g$: recolor the edge $xx_2$ with the color $\mu_j$, the edge $yy_2$ with the color $\mu_i$. Recall that assigning color $\mu_i$ to the edge $yy_2$ is valid. By Lemma~\ref{lem:UniqueMaxBichPath}, at most one $(1,\mu_j)$-maximal bichromatic path can pass through $x$ and the $(1,\mu_j)$-maximal bichromatic path through $x$ goes to $y$. Therefore, coloring the edge $xx_2$ with the color $\mu_j$ is also valid. The color $2$ is now a candidate color for assignment to edge $xy$. Since $2 \notin F_{xx_1}$ and $2 \notin F_{xx_2}$, this gives us a valid coloring of $G$, a contradiction to our assumption that $G$ was a minimum counterexample.

 \begin{subcase}
    $1 \notin F_{yy_2} $ and $2 \notin F_{yy_1}$ 
 \end{subcase}
 Given coloring $g$ for $G'$, we can attempt to extend the coloring $g$ to get a coloring for $G$ by assigning one of the candidate colors in $S$ to the edge $xy$. Since no color in $S$ is valid for the edge $xy$, for every color $\mu_i \in S$, there exists either a $(1,\mu_i, xy)$-critical path or a $(2,\mu_i, xy)$-critical path or both. To be specific, the $(1,\mu_i, x, y)$-maximal bichromatic paths pass through $x_1$ and $y_1$. The $(2,\mu_i, x, y)$-maximal bichromatic paths  pass through $x_2$ and $y_2$.  Suppose there exists $\gamma$ number of $(1,\mu_i,x, y)$-maximal bichromatic paths that pass through $x_1$ (Note that all $\mu_i$ are distinct. See Lemma~\ref{lem:UniqueMaxBichPath}). Since all the $t-1$ colors in $S$ are not valid for the edge $xy$, there exists at least $((t-1) - \gamma)$ number of $(2,\mu_i,x, y)$-maximal bichromatic paths that pass through $x_2$.
 
 Now we can attempt a recoloring $f_0$ of $g$ by exchanging colors on the edges $yy_1$ and $yy_2$. Since  $1 \notin F_{yy_2} $ and $2 \notin F_{yy_1}$, the recoloring is proper.  Since  $1 \notin F_{yy_1} $ and $2 \notin F_{yy_2}$, the recoloring is valid.

\begin{figure}[h!]
			\centering
			\begin{tikzpicture}[scale = 0.8]
				\begin{scope}[every node/.style={circle,draw,inner sep=0pt, minimum size=4.5ex}]
					\node (x) at (-1,1) {$x$};
                    \node (y) at (2,1) {$y$};
					\node (y1) at (4,2) {$y_1$};
                    \node (y2) at (4, 0) {$y_2$};
					\node (x1) at (-3,2) {$x_1$};
					\node (x2) at (-3,0) {$x_2$};
     	              \node (z1) at (6,3.5) {$z_1$};
				    \node (z2) at (8,2) {$z_{t-1}$};
                    \node (w1) at (6, -2) { $u_a$};
                    \node (w2) at (8, 0){};
				\end{scope}
			
		              \node (zi) at (6, 3) { };
                    \node (zj) at (6.5, 2.6){};
                    \node (z1a) at (7.5,6){};
                    \node (z1b) at (8,5){};
                    \node (z2b) at (10.5,2.7){};
                    \node (z2a) at (10,3.5){};
                    
                    \node (wi) at (6, -1.25) { };
                    \node (wj) at (6.5, -0.75){};

                     \node (w2a) at (7.5,-4){};
                    \node (w2b) at (8,-3.5){};
                    \node (w1b) at (10.5,0.7){};
                    \node (w1a) at (10,1.5){};
                    
				\draw [dotted] (x) to (y);
                \draw [dash dot]  (zi) to (y1);
                \draw [dash dot]  (zj) to (y1);
                 \draw (y) to  node[above]{$1$} (y1);
                \draw (y) to  node[above]{$2$} (y2);
				\draw (x) to node[above]{$1$} (x1);
				\draw (x) to node[below]{$2$} (x2);
				\draw (y1) to node[above]{$\mu_1$} (z1);
				\draw (y1) to node[below]{$\mu_{t-1}$} (z2);
				
				\draw (z1) to node[above]{$1$} (z1a);
				\draw (z1) to node[above]{$2$} (z1b);
                \draw (z2) to node[above]{$1$} (z2a);
				\draw (z2) to node[above]{$2$} (z2b);

                \draw (y2) to node[below]{$\mu_1$} (w1);
				\draw (y2) to node[above]{$\mu_{t-1}$} (w2);
    
				\draw [dash dot]  (wi) to (y2);
                \draw [dash dot]  (wj) to (y2);

                \draw   (w1b) to node[above]{$2$} (w2);
                \draw   (w1a) to node[above]{$1$} (w2);

                 \draw  (w2b)to node[above]{$1$}   (w1);
                \draw  (w2a) to node[above]{$2$} (w1);
			\end{tikzpicture}
			\caption{Case 2.3. The neighbourhood of edge $xy$}
			\label{fig:case2.2}
		\end{figure}
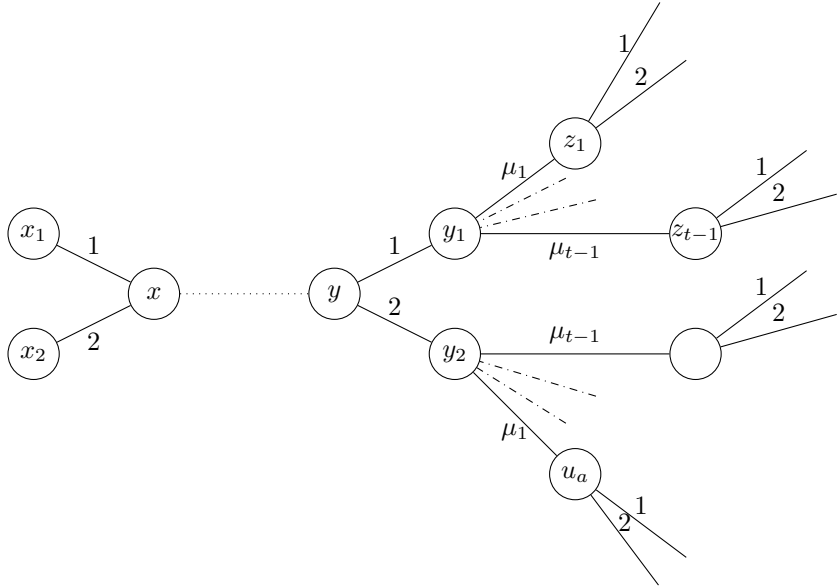
  
\begin{claim} The number of $(1,\mu_i)$-bichromatic paths passing through $x_1$ is $t-1$ and the number of $(2,\mu_i)$-bichromatic paths passing through $x_2$ is $t-1$.

\end{claim} \label{claim:tminus1}
\begin{proof}
We can attempt to extend the coloring $f_0$ to get a coloring for $G$ by assigning colors from the set $S$ to $xy$. If any of the $t-1$ colors in $S$ is valid for the edge $xy$, then we can successfully extend the coloring to get a valid coloring for $G$. Therefore, we may assume that there exists either a $(1,\mu_i, x,y)$-maximal bichromatic path or a $(2,\mu_i, x,y)$-maximal bichromatic path or both for every $\mu_i \in S$. The $(1,\mu_i,x,y)$-maximal bichromatic paths pass  through $y_2$ and $x_1$ with respect to coloring $f_0$. The $(2,\mu_i,x,y)$-maximal bichromatic paths pass through $y_1$ and $x_2$ with respect to coloring $f_0$. Recall that from our failed attempt to extend coloring $g$, we saw that there exist at least $((t-1) - \gamma)$ number of $(2,\mu_i,x, y_2)$-maximal bichromatic paths with respect to coloring $f_0$ that pass through vertex $x_2$. Therefore, by Lemma~\ref{lem:UniqueMaxBichPath}, no color $\mu_k$ from this set of  $((t-1) - \gamma)$ colors can have a second $(2,\mu_k)$-maximal bichromatic path that passes through $x_2$. We can see that at most $\gamma$ number of other  $(2,\mu_i)$-bichromatic paths can go from $x$ to $y_1$ through $x_2$. Since $t-1$ colors from $S$ are invalid for the edge $xy$ with respect to coloring $f_0$, at least $((t-1)-\gamma)$ number of $(1,\mu_i)$-bichromatic paths go from $x$ to $y_2$ through $x_1$. By Lemma~\ref{lem:UniqueMaxBichPath}, these $((t-1)-\gamma)$ colors are different from the $\gamma$ colors that we know to have a $(1,\mu_i$)-maximal bichromatic path passing through $x_1$. Therefore, exactly $\gamma$ number of other  $(2,\mu_i)$-bichromatic paths  go from $x$ to $y_1$ through $x_2$ and exactly $((t-1)-\gamma)$ number of $(1,\mu_i)$-bichromatic paths go from $x$ to $y_2$ through $x_1$. So, from our two failed attempts to extend the colorings $f_0$ and $g$ to get a valid coloring for $G$, we can infer that there are $t-1$ number of  $(1,\mu_i)$-bichromatic paths passing through $x_1$ and  $t-1$ number of  $(2,\mu_i)$-bichromatic paths passing through $x_2$, as claimed. 
\end{proof}

Therefore, $F_{xx_2} =  F_{xx_1} = S $ and $1 \notin F_{xx_2} $ and $2 \notin F_{xx_1}$.
 
\begin{claim} $F_{y_1z_i} = \{1,2\}$, for every neighbor $z_i \in N_G(y_1) \setminus y$ and $F_{y_2u_i} = \{1,2\}$, for every neighbor $u_i \in N_G(y_2) \setminus y$. 
\end{claim}
\begin{proof}
Since  $1 \notin F_{xx_2} $ and $2 \notin F_{xx_1}$,  exchanging the colors of the edges $xx_1$ and $xx_2$ to get a new valid coloring $f_1$ of $g$ is possible. We can again attempt to extend the coloring $f_1$  to get a valid coloring of the graph $G$ by assigning colors from  $S$ to $xy$. 

Recall that from our failed attempt to extend coloring $g$, we saw that there exist at least $((t-1) - \gamma)$ number of $(2,\mu_i,y, x_2)$-maximal bichromatic paths with respect to coloring $f_1$ that pass through vertex $y_2$. Therefore, by Lemma~\ref{lem:UniqueMaxBichPath}, no color $\mu_k$ from this set of  $((t-1) - \gamma)$ colors can have a second $(2,\mu_k)$-maximal bichromatic path that passes through $y_2$. We can see that at most $\gamma$ number of $(2,\mu_i)$-bichromatic paths can go from $y$ to $x_1$ through $y_2$. Since $t-1$ colors from $S$ are invalid for the edge $xy$ with respect to coloring $f_1$, at least $((t-1)-\gamma)$ number of $(1,\mu_i)$-maximal bichromatic paths go from  $y$ to $x_2$ through $y_1$. By Lemma~\ref{lem:UniqueMaxBichPath}, these $((t-1)-\gamma)$ colors are different from the $\gamma$ colors that we know to have a $(1,\mu_i$)-maximal bichromatic path passing through $y_1$. Therefore, exactly  $((t-1)-\gamma)$ number of $(1,\mu_i)$-bichromatic paths go from  $y$ to $x_2$ through $y_1$ and exactly $\gamma$ number of $(2,\mu_i)$-bichromatic paths can go from $y$ to $x_1$ through $y_2$ with respect to coloring $f_1$. So, from our two failed attempts to extend the colorings $g$ and $f_1$ to get a valid coloring for $G$, we can infer that there are $t-1$  number of $(1, \mu_i)$-bichromatic paths passing through $y_1$ and  $t-1$ number of $(2,\mu_i)$-bichromatic paths passing through $y_2$.

Next, we attempt the following recoloring $f_2$ of $g$: The edges $xx_1$ and $yy_1$ are assigned color $2$, while the edges $yy_2$ and $xx_2$ are assigned color $1$. We can see that the recoloring is valid. As before, we could then attempt to extend the coloring $f_2$ to get a valid coloring for $G$ by assigning a candidate color from $S$ to the edge $xy$. A color  $\mu_i \in S$ is invalid only if there exists a $(1, \mu_i,x,y)$-maximal bichromatic path that passes through $x_2$ and $y_2$ or a  $(2, \mu_i,x,y)$-maximal bichromatic path that passes through $x_1$ and $y_1$. Recall that, from our failed attempt to extend coloring $f_0$, we saw that there exist  $\gamma$ number of $(2,\mu_i, y, x_2)$-maximal bichromatic paths  that pass through $y_1$  with respect to coloring $f_2$.  Note that by Lemma~\ref{lem:UniqueMaxBichPath}, for any color $\mu_k$  from this set of $\gamma$ colors there can not exist a different $(2,\mu_k)$-maximal bichromatic path passing through $y_1$. Also, from our failed attempt to extend coloring $f_0$, we saw that there exist $((t-1) - \gamma)$ number of $(1,\mu_i,y, x_1)$-maximal bichromatic paths that pass through $y_2$ with respect to coloring $f_2$. Again, by Lemma~\ref{lem:UniqueMaxBichPath}, for any color $\mu_k$  from this set of $((t-1) - \gamma)$  colors, there can not exist a different $(1,\mu_k)$-maximal bichromatic path passing through $y_2$.

Since $|F_{yy_1}| \leq t-1$ and $ |F_{yy_2}| \leq t-1$, and because  all the $t-1$ colors from $S$ are invalid for the edge $xy$, $((t-1)-\gamma)$ number of $(2, \mu_i,x,y)$-maximal bichromatic paths exist that pass through $x_1$, $y_1$ and  $\gamma$ number of $(1, \mu_i,x,y)$-maximal bichromatic paths exist that pass through $x_2$ , $y_2$. Therefore, from the two failed attempts to extend colorings $f_2$ and $f_0$, we can infer that there are $t-1$  number of $(1,\mu_i)$-bichromatic paths passing through $y_2$ and  $t-1$ number of $(2, \mu_i)$-bichromatic paths passing through $y_1$.

From this we can conclude that $F_{y_1z_i} = \{1,2\}$, for every neighbor $z_i \in N_G(y_1) \setminus y$ and $F_{y_2u_i} = \{1,2\}$, for every neighbor $u_i \in N_G(y_2) \setminus y$ ( See Figure~\ref{fig:case2.2}).
\end{proof}

 Suppose, $g(y_1z_1) = \mu_1$ and $g(y_1z_2) = \mu_2$.   We can then define the following recoloring $f_3$ of $g$:

The colors of the edges $y_1z_1$ and $y_1z_2$ are exchanged, i.e $y_1z_2$ is assigned color $\mu_1$ and $y_1z_1$ is assigned color $\mu_2$ and colors of edges $y_2u_a$ and $y_2u_b$  are exchanged, where $y_2u_a$ and $y_2u_b$ are two edges with $g(y_2u_a) = \mu_1$ and $g(y_2u_b) = \mu_2$. This breaks the $(1, \mu_1, x, y)$, $(1, \mu_2, x, y)$, $(2, \mu_1, x, y)$ and $(2, \mu_2, x, y)$  critical paths. This recoloring is valid because $\{1,2\} \notin F_{yy_1}$ and $\{1,2\} \notin F_{yy_2}$.  Assigning either $\mu_1$ or $\mu_2$ to edge $xy$ gives a valid coloring for $G$.

 
 \begin{case}
     $|F_{xy} \cap F_{yx}| = 1$
 \end{case}\label{case:atmost3_intersection1}

Suppose $d_G(x) = d_G(y) =2$.  $|F_{xy} \cup F_{yx} \cup F_{yy_1}| \leq t$. There exists at least one candidate color that can be assigned to the edge $xy$ to get a valid coloring for $G$. Therefore, at most one of the vertices $x$ or $y$ can have degree $2$. Without loss of generality, let $d_G(x) =2$ and $d_G(y) =3$. Let $g(xx_1) = g(yy_1) = 1$ and $g(yy_2) = 2$. Since  $|F_{xy} \cap F_{yx}| =1$, $|F_{xy} \cup F_{yx}| =2$. The set of candidate colors $S$ has $t-1$ colors. Suppose none of the candidate colors are valid for the edge $xy$. Then, for every color $\mu_i \in S$, there exist $(1,\mu_i,xy)$-critical paths. Also notice that, $2 \notin F_{yy_1}$. Suppose there exists a color $\mu_k \in S$ such that $\mu_k \notin F_{yy_2}$. Then we perform the following recoloring $c$ of $g$: the edge $yy_2$ is assigned the color $\mu_k$. By Lemma~\ref{lem:UniqueMaxBichPath},        at most one $(1,\mu_k)$-maximal bichromatic path can pass through $y$. We have already seen that $(1,\mu_k,xy)$-critical path exists. Therefore, a $(1,\mu_k,yy_2)$-critical path can not exist. So, the recoloring $c$ is valid. Assigning color $2$ to the edge $xy$ gives us a valid coloring for $G$ because $2 \notin F_{yy_1}$. Therefore,  $F_{yy_2} = S$. We now perform the recoloring $c'$ of $g$: the color of edges $yy_1$ and $yy_2$ are exchanged, i.e., $yy_1$ is assigned color $2$ and $yy_2$ is color $1$. The recoloring is proper because $1 \notin F_{yy_2}$ and $2 \notin F_{yy_1}$. The recoloring is valid because $2 \notin F_{yy_2}$ and $1 \notin F_{yy_1}$. Then, assigning any  color $\mu_i$  from the set $S$ to the edge $xy$ gives a valid coloring for $G$. This is because $(1,\mu_i,xy)$-critical path passes through $y_1$. By Lemma~\ref{lem:UniqueMaxBichPath}, at most one $(1,\mu_i)$-maximal bichromatic path can pass through $y$.

 Therefore, we can assume that $d_G(x) =3$ and $d_G(y) = 3$. Let $g(xx_1) = g(yy_1 ) =1$, $g(yy_2) = 2$ and $g(xx_2) = 3$. Let  $S$ contain $t -2$ candidate colors $\{\mu_{1}, \mu_2,...\mu_{t-2}\}$. Since no color in $S$ is valid for the edge $xy$, all colors in $S$ are also present in $F_{yy_1} $ and $F_{xx_1}$. Depending on whether $2 \in F_{yy_1}$ or not, we have the following subcases:

 \begin{subcase}
     $2 \in F_{yy_1}$
 \end{subcase}
 Then, $F_{yy_1} = \{2, \mu_1, \mu_2,...\mu_{t-2}\}$. Suppose $3 \notin F_{yy_2}$. We define the recoloring $h_1$ of $g$: The edge $yy_2$ is recolored with color $3$. The  coloring $h_{1}$ is valid as there is no $(1,3)$-bichromatic cycles formed due to recoloring as $3 \notin F_{yy_1}$. Notice that $|F_{xy} \cap F_{yx}| =2$ and the problem has reduced to Case~\ref{case:atmost3_intersection2}. Therefore $3 \in F_{yy_2}$.
 
 Suppose $1 \notin F_{yy_2}$. Since $3 \notin F_{yy_1}$ and $1 \notin F_{yy_2}$, we can define the following recoloring $h_2$ of $g$: the edge $yy_1$ is assigned the color $3$ and the edge $yy_2$ is assigned the color $1$. Notice that there is no $(1,3)$-bichromatic cycle created due to the recoloring, since $1 \notin F_{yy_1}$. Hence, the recoloring is valid and the problem has reduced to Case~\ref{case:atmost3_intersection2}. Therefore, we can say that $1  \in F_{yy_2}$ and $  3 \in F_{yy_2}$. Therefore, there exists a color $\mu_i \notin F_{yy_2}.$
 
 We can attempt the following recoloring $h_3$ of $g$: the edge $yy_2$ is recolored with color $\mu_i$. By Lemma~\ref{lem:UniqueMaxBichPath} at most one $(1, \mu_i)$-maximal bichromatic path can pass throught $y$ and we have seen that the $(1, \mu_i)$-maximal bichromatic path through $y$ goes to $x$. Therefore, recoloring is valid. We can attempt to extend the coloring $h_3$ to get a coloring for $G$, by assigning color $2$ to $xy$. This is invalid only if $(1,2, xy)$-critical path existed in coloring $g$ of $G'$. Therefore, $2 \in F_{xx_1}$ and $F_{xx_1} = \{2, \mu_1, \mu_2,...\mu_{t-2}\}$. We have the following two subcases:
\begin{subsubcase}
    $2 \notin F_{xx_2}$
\end{subsubcase}
We define the following recoloring $h_4$ of $g$: the edge $xx_2$ is assigned color $2$.  We have seen that $(1,2)$-maximal bichromatic path that passes through $x$ goes to $y$. By Lemma~\ref{lem:UniqueMaxBichPath}, at most one $(1,2)$-maximal bichromatic path can pass through $x$ . Therefore, recoloring is valid. Since $|F_{xy} \cap F_{yx}|  = 2$, the problem has reduced to Case~\ref{case:atmost3_intersection2}..

\begin{subsubcase}
    $2 \in F_{xx_2}$
\end{subsubcase}
Observe that all colors in $S$ must be present in $F_{xx_2}$. If that wasn't the case and a color $\mu_i \in S$ wasn't present in $ F_{xx_2}$, then we can perform the following recoloring $h_5$ of $g$: recolor the edge $xx_2$ with the color $\mu_i$. By Lemma~\ref{lem:UniqueMaxBichPath} at most one $(1, \mu_i)$-maximal bichromatic path can pass through $x$ and we have seen that the $(1, \mu_i)$-maximal bichromatic path through $x$ goes to $y$. Therefore, coloring is valid. Then we can extend this coloring to get a valid coloring for $G$ by assigning the newly freed color $3$ to the edge $xy$. This is a valid coloring for $G$ since $3 \notin F_{xx_1}$. Therefore, we arrive at a contradiction.

\begin{subcase}
    $2 \notin F_{yy_1}$
\end{subcase}

Suppose there exists a color $\mu_{i} \notin F_{yy_2}$. Then we define a recoloring $h_{6}$  from $g$ : color the edge $yy_2$ from $2$ to $\mu_{i}$. The resulting coloring $h_{6}$ is valid, because $(1, \mu_{i})$-critical path passing through $y$ goes to $x$.  By Lemma~\ref{lem:UniqueMaxBichPath}, there is no $(1, \mu_{i})$-bichromatic cycle passing through $y$ and $y_2$. We can extend the coloring $h_6$ to get a coloring of $G$ by assigning the color $2$ to edge $xy$. Since $2 \notin F_{yy_1}$, this gives us a valid coloring of $G$, which is a  contradiction to our assumption that $G$ is the minimum counterexample. Therefore, every $\mu_{i} \in F_{yy_2}$.
 
Suppose $1 \notin F_{yy_2}$. We can define a new recoloring $h_7$ of $g$: Exchange the colors of the edges $yy_1$ and $yy_2$, i.e., assign color $2$ to $yy_1$ and color $1$ to $yy_2$. Since $2 \notin F_{yy_2}$, there is no $(1,2)$-bichromatic cycle passing through $y_2$ due to the recoloring.  Hence the recoloring is valid.  Notice that $(1, \mu_{i})$-maximal bichromatic path that goes through $x$ goes to $y_1$. By Lemma~\ref{lem:UniqueMaxBichPath}, at most one such maximal bichromatic path can go through $x$. So, a $(1, \mu_{i},xy)$-critical path through $y_2$ can not exist. Therefore, assigning any color $\mu_i \in S$ to the edge $xy$ does not create any $(1, \mu_{i})$-bichromatic cycles that pass through $y_2$. This gives a valid coloring for $G$, a contradiction.  Therefore, we can conclude that $1 \in F_{yy_2}$. So, we can infer that $3 \notin F_{yy_2}$ and $F_{yy_2} = \{1, \mu_1, \mu_2, ..., \mu_{t -2} \}$.

Next, we attempt recoloring $h_8$ of coloring $g$: Recolor the edge $yy_2$ from color $2$ to $3$. Notice that $|F_{xy} \cap F_{yx}| = 2$ in $h_{8}$. If $h_8$ is valid, the problem reduces to Case~\ref{case:atmost3_intersection2}. Coloring $h_{8}$ is invalid only if there was a $(1,3, yy_2)$-critical path.

Suppose there exists a color $\mu_i \notin F_{xx_2}$. Then we can define recoloring $h_{9}$ of $g$: Recolor the edge $xx_2$ to color $\mu_i$. This is a valid coloring because by Lemma~\ref{lem:UniqueMaxBichPath}, at most one  $(1, \mu_i)$-maximal bichromatic path passes through $x$ and we know that a $(1, \mu_i, xy)$-critical path already exists. Since there exists a $(1,3, y, y_2)$-maximal bichromatic path, there can not exist a $(1,3, y,x)$-critical path. Therefore, it is valid to assign color $3$ to edge $xy$ to get a valid coloring for $G$, which is a  contradiction. Therefore, every color in $S$ is present in $F_{xx_2}.$

Now, we have the following subcases:
\begin{subsubcase}
    $2 \notin F_{xx_2}$
\end{subsubcase} 
 We define the following recoloring $h_{10}$ of $g$: The edge $xx_2$ is recolored as $2$. Notice that $|F_{xy} \cap F_{yx}| =2 $ and therefore it reduces to Case~\ref{case:atmost3_intersection2}, if recoloring is valid. The coloring $h_{10}$ is only invalid if the recoloring created a $(1,2)$-bichromatic cycle passing through $x$ and $x_2$. Therefore, we can conclude that $2 \in F_{xx_1}$ and $1 \in F_{xx_2}$. We can see that $F_{xx_1} = \{2,\mu_1, \mu_2, ..., \mu_{t-2}\} $ and $F_{xx_2} = \{1,\mu_1, \mu_2, ..., \mu_{t-2}\} $. 
 
 Now we attempt recoloring $h_{11}$ of coloring $g$: Recolor the edge $yy_1$ from color $1$ to color $2$ and recolor the edge $yy_2$ from color $2$ to color $3$.  Recoloring is proper because $3 \notin F_{yy_2}$ and $2 \notin F_{yy_1}$. Recoloring is valid because $2 \notin F_{yy_2}$. If any of the colors in $S$ are valid for $xy$, we are done. Therefore, we can assume that there exists $( 3, \mu_i,  xy)$-critical paths for every color $\mu_i$ in $S$. 
 Now let us attempt the following recoloring $h_{12}$ of $g$: The edge $xx_1$ is recolored with color $3$ and the edge $xx_2$ is recolored with color $2$. The recoloring is valid due to the absence of color $3$ in $F_{xx_2}$. We can attempt to extend this coloring to get a valid coloring for $G$ by assigning the edge $xy$ with colors from $S$. This assignment is invalid if for every $\mu_i \in S$, there exists a $( 2, \mu_i, xy)$-critical path in coloring $g$ of $G'$. 
 From this we can conclude that $F_{x_2w_i} = \{2,3\}$, for every neighbor $w_i \in N_G(x_2) \setminus \{x,x^*\}$, where $x^* \in N_G(x_2)$ and $g(x_2x^*) \notin S$.
Next, we attempt the following recoloring $h_{13}$ of $g$: The edge $xx_1$ is recolored with color $3$ and the edge $xx_2$ is recolored with color $2$ (just as before). In addition, we also exchange the colors of the edges $x_2w_i$ and $x_2w_j$, where $i \neq j$ and $i, j \leq t - 2$ and $g(x_2w_i) = \mu_i$ and $g(x_2w_j) = \mu_j$, with $\mu_i, \mu_j \in S$ (Since $\Delta(G) \geq 4$, $\mu_i, \mu_j$ exists). This breaks the $(2,\mu_i,  x, y)$ and $(2, \mu_j,  xy)$-critical path that was present. Therefore, either of $\mu_i$ or $\mu_j$ is a valid color for $xy$ and we can get a valid coloring for $G$. Note that exchanging the colors of the edges $x_2w_i$ and $x_2w_j$ results in a valid coloring of $G'$ because $\{2,3\} \notin F_{xx_2}$ and because $F_{x_2w_i} = \{2,3\}$, for every $w_i$.   

\begin{subsubcase}
    $ 2 \in F_{xx_2}$
\end{subsubcase}
Since $2 \in F_{xx_2}$, $1 \notin F_{xx_2}$. Suppose $2 \in F_{xx_1}$. We define the following recoloring $h_{14}$ of $g$: Recolor the edge $xx_1$ with color $3$ and recolor the edge $xx_2$ with color $1$. The coloring is valid since recoloring doesn't create a $(1, 3)$-bichromatic cycle due to absence of color $1$ in $F_{xx_1}$. Coloring the edge $xy$ with any color from $S$ gives a valid coloring for the graph $G$. Notice that  $(1,\mu_i)$-maximal bichroamtic path that pass through $y$ go to $x_1$. By Lemma~\ref{lem:UniqueMaxBichPath}, at most one $(1,\mu_i)$-maximal bichromatic path can pass through $y$ and therefore, we can see that recoloring is valid.

Therefore, we can conclude that $2 \notin F_{xx_1}$. We perform the following recoloring $h_{15}$ of $g$: The edge $xx_1 $ is recolored with color $2$ and the edge $xx_2$ is recolored with color $1$. This recoloring is valid since no $(1,2)$-bichromatic cycle is created due to the absence of color $1$ in $F_{xx_1}$. Notice that $|F_{xy} \cap F_{yx}| = 2$ and this reduces to Case~\ref{case:atmost3_intersection2}.
\end{proof}
\renewcommand\qed{$\hfill\blacksquare$}

This concludes the proof for Lemma~\ref{lem:ge3}. Therefore, we may assume either $d_G(x) >3 $ or $d_G(y) >3$. Without loss of generality, assume $d_G(y) >3$. 

\setcounter{case}{0}
\renewcommand\thecase{\alph{case}}
		\begin{case}\label{case:3SparseNoIntersection}
			$|F_{xy} \cap F_{yx}| = 0$.
		\end{case}
		
		In this case, any candidate color for the edge $xy$ is also valid. Since $|F_{xy} \cup F_{yx}| \leq t$, there is at least one candidate color in $S$ that can be assigned to the edge $xy$ to get a valid coloring $f$ of $G$, contradiction to the fact that $G$ is a minimum counterexample. 
		
		\begin{case}\label{case:3SparseOneIntersection}
			$|F_{xy} \cap F_{yx}| = 1$.
		\end {case}
            Without loss of generality, assume that $\alpha$ is the color present in both $F_{xy}$ and $F_{yx}$. Let $y_1 \in N(y)$ and $g(yy_1) = \alpha$. Since $|F_{xy} \cap F_{yx}| = 1$ and since $d_G(x) + d_G(y) < t +3$, we have $|F_{xy} \cup F_{yx}| \leq t -1$, implying that $|S| \ge 2$. Let $\mu,\nu \in S$. Since any color in $S$ is not valid for the edge $xy$ in $G$, there exists an $(\alpha,\mu,xy)$-critical path and an $(\alpha,\nu,xy)$-critical path in $G$. Let $y'_1$ and $y''_1$ be the other neighbors of $y_1$.  Without loss of generality, let $g(y_1y'_1) = \mu$ and $g(y_1y''_1) = \nu$.
			
		\begin{subcase}\label{subcase:3sparseOneIntersection_1}
			There exists $y'$ in $N(y) \setminus y_1$ such that $g(yy') = \gamma$ for some $\gamma \in F_{xy}$, and the remaining colors on $y'$ are $\mu$, $\nu$.
		\end{subcase}
			
		Now, perform a color exchange on the edges $yy_1$ and $yy'$, i.e., assign $\gamma$ to $yy_1$ and $\alpha$ to $yy'$. Clearly, the resulting coloring is valid for $G'$ because $\mu,\nu \notin F_{xy}$. Observe that the $(\alpha,\mu)$-maximal bichromatic path and the $(\alpha,\nu)$-maximal bichromatic path starting from $x$ ends at the vertex $y_1$ because of the recoloring. Therefore, by Lemma~\ref{lem:UniqueMaxBichPath}, either $\mu$ or $\nu$ can be assigned to the edge $xy$ to get a valid coloring $f$ of $G$, a contradiction to the fact that $G$ is a minimum counterexample.
		
\begin{figure}[!h] 
            \begin{subfigure}[b]{0.45\textwidth}
		
			\begin{tikzpicture}[scale = 0.6]
				\begin{scope}[every node/.style={circle,draw,inner sep=0pt, minimum size=3ex }]
					\node (x) at (1,0) {$x$};
					\node (y) at (4,0) {$y$};
					\node (x1) at (-1,1) {$x_1$};
					\node (x2) at (-1,-1) {$x_2$};
					
					\node (y1) at (5,2) {$y_1$};
 				   \node (y2) at (6.25,-0.5) {$y_{\Delta-1}$};
                   \node (y3) at (4,3.5){$y'_1$};
                   \node (y4) at (5,4){$y''_1$};

                 \node (y9) at (7,2.3) {$y'$};
				\end{scope}
				  \node (x3) at (0.9,1.9) {};
				   \node (x4) at (0.1,3.4) {};
				  \node  (y5) at (3.5,4.7){};
                  \node  (y6) at (4.5,6){};
                    \node  (y7) at (4.6,1.5){};
                    
				\node  (y8) at (5,-0.4){};

                \node  (y21) at (7.1,0.9){};
                \node  (y20) at (6.8,0.4){};
                \draw [dash dot](y20) to (y);
                \draw [dash dot](y21) to (y);

				\draw [dotted](x) to (y);
				\draw (x) to node[above]{$\alpha$}(x1);
				\draw (x) to (x2);
				\draw (y) to  node[above]{$\alpha$}(y1);
				\draw (y) to (y2);
				\draw (y1) to node[above]{$\mu$} (y3);
				\draw (y1) to node[right]{$\nu$} (y4);
				\draw (y3) to  node[right]{$\alpha$}(y5);
				\draw (y4) to  node[right]{$\alpha$}(y6);
				
				\draw (x1) to node[above]{$\mu$} (x3);
				\draw (x1) to node[above]{$\nu$} (x4);
				\draw  [dashed, , bend right] (y5) to node[above]{} (x3);
				\draw  [dashed, , bend right] (y6) to node[above]{} (x4);
                   \node (y11) at (7,3.5){ };
                   \node (y10) at (7.5,4){};
                   \draw(y) to node[above]{$\gamma$}(y9);
                   \draw (y9) to node[left]{$\mu$} (y11);
				\draw (y9) to node[right]{$\kappa$} (y10);
			\end{tikzpicture}%
			\caption{Coloring $g$ of $G'$. Critical paths $(\alpha,\mu,x,y)$ and $(\alpha,\nu,x,y)$ exist.}
			\label{fig:color_g}
		\end{subfigure}\hspace{1 cm}\begin{subfigure}[b]{0.45\textwidth}
        \centering
        \begin{tikzpicture}[scale = 0.6]
				\begin{scope}[every node/.style={circle,draw,inner sep=0pt, minimum size=3ex}]
					\node (x) at (1,0) {$x$};
					\node (y) at (4,0) {$y$};
					\node (x1) at (-1,1) {$x_1$};
					\node (x2) at (-1,-1) {$x_2$};
					
					\node (y1) at (5,2) {$y_1$};
 				  \node (y2) at (6.25,-0.5) {$y_{\Delta -1}$};
                   \node (y3) at (4,3.5){$y'_1$};
                   \node (y4) at (5,4){$y''_1$};
                   
                   \node(y12) at (7.7, 1.5) {$y''$};
                 \node (y9) at (7,2.3) {$y'$};
				\end{scope}
				  \node (x3) at (0.9,1.9) {};
				   \node (x4) at (0.1,3.4) {};
				  \node  (y5) at (3.5,4.7){};
                  \node  (y6) at (4.5,6){};
                    \node  (y7) at (4.6,1.5){};
                    \node (y13) at (8.7, 1.8){};

                  \node  (y8) at (5,-0.4){};
				\draw [dotted](x) to (y);
				\draw (x) to node[above]{$\alpha$}(x1);
				\draw (x) to (x2);
				\draw (y) to  node[above]{$\gamma$}(y1);
				\draw (y) to (y2);
				\draw (y1) to node[above]{$\mu$} (y3);
				\draw (y1) to node[right]{$\nu$} (y4);
				\draw (y3) to  node[right]{$\alpha$}(y5);
				\draw (y4) to  node[right]{$\alpha$}(y6);
				
				\draw (x1) to node[above]{$\mu$} (x3);
				\draw (x1) to node[above]{$\nu$} (x4);
				\draw  [dashed, , bend right] (y5) to node[above]{} (x3);
				\draw  [dashed, , bend right] (y6) to node[above]{} (x4);
                   \node (y11) at (7,3.5){ };
                   \node (y10) at (7.5,4){};
                   \draw (y) to node[above]{$\alpha$}(y9);
                   \draw (y9) to node[left]{$\mu$} (y11);
				\draw (y9) to node[right]{$\kappa$} (y10);
				\draw (y) to node[above]{$\kappa$} (y12);
				 \draw (y12) to node[above]{$\alpha$}(y13);
				 \draw  [dashed, bend right] (y13) to node[above]{} (y10);
				\node  (y20) at (6.8,0.4){};
                \draw [dash dot](y20) to (y);
				 
			\end{tikzpicture}\caption{Coloring $g_1$ is invalid only if it forms a $(\alpha, \kappa)$-bichromatic cycle as shown.}
			\label{fig:color_g1}
    \end{subfigure}\hspace{1cm}
    \begin{subfigure}[b]{0.45\textwidth}
        \centering
        \begin{tikzpicture}[scale = 0.6]
				\begin{scope}[every node/.style={circle,draw,inner sep=0pt, minimum
				 size=3ex}]
				
					\node (x) at (1,0) {$x$};
					\node (y) at (4,0) {$y$};
					\node (x1) at (-1,1) {$x_1$};
					\node (x2) at (-1,-1) {$x_2$};
					
					\node (y1) at (5,2) {$y_1$};
 				  \node (y2) at (6.25,-0.5) {$y_{\Delta-1}$};
                   \node (y3) at (4,3.5){$y'_1$};
                   \node (y4) at (5,4){$y''_1$};
                   
                   \node(y12) at (7.7, 1.5) {$y''$};
                 \node (y9) at (7,2.3) {$y'$};
				\end{scope}
				  \node (x3) at (0.9,1.9) {};
				   \node (x4) at (0.1,3.4) {};
				  \node  (y5) at (3.5,4.7){};
                  \node  (y6) at (4.5,6){};
                    \node  (y7) at (4.6,1.5){};
                    \node (y13) at (9.2, 1.8){};
                   \node (y14) at (8.3, 2.5){};
                  \node  (y8) at (5,-0.4){};
				\draw [dotted] (x) to (y);
				\draw (x) to node[above]{$\alpha$}(x1);
				\draw (x) to (x2);
				\draw (y) to  node[above]{$\gamma$}(y1);
				\draw (y) to (y2);
				\draw (y1) to node[above]{$\mu$} (y3);
				\draw (y1) to node[right]{$\nu$} (y4);
				\draw (y3) to  node[right]{$\alpha$}(y5);
				\draw (y4) to  node[right]{$\alpha$}(y6);
				
				\draw (x1) to node[above]{$\mu$} (x3);
				\draw (x1) to node[above]{$\nu$} (x4);
				\draw  [dashed, , bend right] (y5) to node[above]{} (x3);
				\draw  [dashed, , bend right] (y6) to node[above]{} (x4);
			 
                   \node (y11) at (7,3.5){ };
                   \node (y10) at (7.5,4){};
                   \draw (y) to node[above]{$\nu$}(y9);
                   \draw (y9) to node[left]{$\mu$} (y11);
				\draw (y9) to node[right]{$\kappa$} (y10);
				\draw (y) to node[above]{$\kappa$} (y12);
				 \draw(y12) to node[above]{$\alpha$}(y13);
				  \draw(y12) to node[above]{$\nu$}(y14);
				 \draw  [dashed, bend right] (y13) to node[above]{} (y10);
				  \draw  [dashed, thick, bend left] (y10) to node[above]{} (y14);
	
				\node  (y20) at (6.8,0.4){};
                \draw [dash dot](y20) to (y);
			\end{tikzpicture}\caption{Coloring $g2$ is invalid only if  it forms a $(\kappa, \nu)$-bichromatic cycle as shown.}
			\label{fig:color_g2} 
    \end{subfigure}	\hspace{0.75cm}
    \begin{subfigure}[b]{0.45\textwidth}
        \centering
        \begin{tikzpicture}[scale = 0.6]
				\begin{scope}[every node/.style={circle,draw,inner sep=0pt, minimum
				 size=3ex}]
	
\node (x) at (1,0) {$x$};
					\node (y) at (4,0) {$y$};
					\node (x1) at (-1,1) {$x_1$};
					\node (x2) at (-1,-1) {$x_2$};
					
					\node (y1) at (5,2) {$y_1$};
 				  \node (y2) at (6.25,-0.5) {$y_{\Delta-1}$};
                   \node (y3) at (4,3.5){$y'_1$};
                   \node (y4) at (5,4){$y''_1$};
                   
                   \node(y12) at (7.7, 1.5) {$y''$};
                 \node (y9) at (7,2.3) {$y'$};
				\end{scope}
				  \node (x3) at (0.9,1.9) {};
				   \node (x4) at (0.1,3.4) {};
				  \node  (y5) at (3.5,4.7){};
                  \node  (y6) at (4.5,6){};
                    \node  (y7) at (4.6,1.5){};
                    \node (y13) at (9.2, 1.8){};
                   \node (y14) at (8.3, 2.5){};
                  \node  (y8) at (5,-0.4){};
				\draw [dotted] (x) to (y);
				\draw (x) to node[above]{$\alpha$}(x1);
				\draw (x) to (x2);
				\draw (y) to  node[above]{$\kappa$}(y1);
				\draw (y) to (y2);
				\draw (y1) to node[above]{$\mu$} (y3);
				\draw (y1) to node[right]{$\nu$} (y4);
				\draw (y3) to  node[right]{$\alpha$}(y5);
				\draw (y4) to  node[right]{$\alpha$}(y6);
				
				\draw (x1) to node[above]{$\mu$} (x3);
				\draw (x1) to node[above]{$\nu$} (x4);
	 
                   \node (y11) at (7,3.5){ };
                   \node (y10) at (7.5,4){};
                   \draw(y) to node[above]{$\alpha$}(y9);
                   \draw (y9) to node[left]{$\mu$} (y11);
				\draw (y9) to node[right]{$\kappa$} (y10);
				\draw (y) to node[above]{$\gamma$ } (y12);
				 \draw (y12) to node[above]{$\alpha$}(y13);
				  \draw (y12) to node[above]{$\nu$}(y14);
				\node  (y20) at (6.8,0.4){};
                \draw [dash dot](y20) to (y);
			\end{tikzpicture}\caption{Coloring $g_3$ is valid.  Edge $xy$ can be assigned color $\nu$.}
			\label{fig:color_g3}
    \end{subfigure}	
    
    \caption{Case b.2: There exists $y'$ in $N(y) \setminus y_1$ such that $g(yy') = \gamma$ for some $\gamma \in F_{xy}$, and exactly one of $\mu$, $\nu$ is present on $y'$.}
    
		\end{figure}
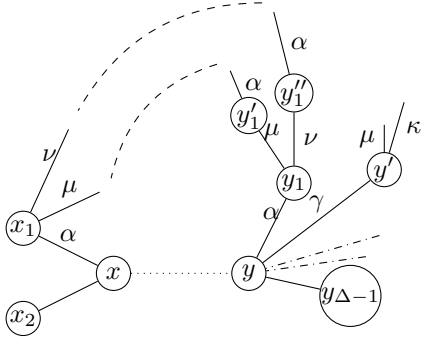
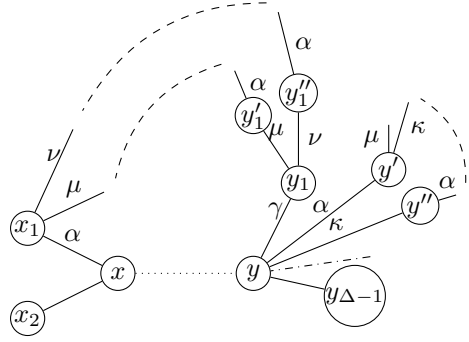
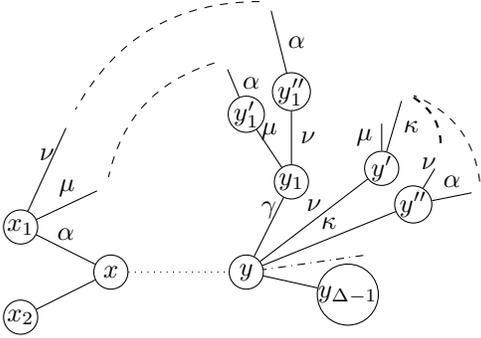
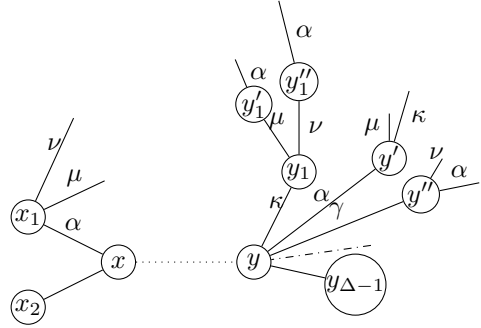	
				
		\begin{subcase}\label{subcase:3sparseOneIntersection_2}
			There exists $y'$ in $N(y) \setminus y_1$ such that $g(yy') = \gamma$ for some $\gamma \in F_{xy}$, and exactly one of $\mu$, $\nu$ is present on $y'$.
		\end{subcase}
		
		Without loss of generality, let the color $\mu$ be present (and not $\nu$) on some edge incident on $y'$. Let $\kappa$ be the color on the other edge incident on $y'$ (if such an edge is present). We first show that $\kappa$ can not be $\alpha$. Suppose $\kappa$ is $\alpha$. Then perform a recoloring $g_0$ of $g$: the edge $yy'$ is assigned the color $\nu$. Recall that $(\alpha, \nu, xy)$-critical path exists. Therefore, by Lemma~\ref{lem:UniqueMaxBichPath},  $(\alpha, \nu, yy')$-critical path can not exist. Therefore, recoloring $g_0$ is valid. Since $\gamma \notin F_{yy_1}$, the color $\gamma$ can be assigned to the edge $xy$ to get a valid coloring of $G$. Therefore, we can conclude that $\kappa$ can not be $\alpha$. Now, perform a color exchange on the edges $yy_1$ and $yy'$, i.e., assign $\gamma$ to $yy_1$ and $\alpha$ to $yy'$ to get a coloring $g_1$ of $G'$. If $g_1$ is a valid coloring, we can assign the color $\nu$ to the edge $xy$ to get a valid coloring of $G$, a contradiction. The coloring $g_1$ is not valid only if an $(\alpha, \kappa)$-bichromatic cycle is created when the edge $yy'$ was recolored with $\alpha$ (See Figure~\ref{fig:color_g1}). Such a cycle must pass through some edge $yy''$ colored $\kappa$. Thus we can also assume that one of the other edges incident on $y''$ is colored $\alpha$ in $g$.
			
		Now, we attempt a recoloring $g_2$ of $g$: Let the edges $yy_1$ and $yy'$ be recolored with $\gamma$ and $\nu$ respectively. If $g_2$ is valid, assignment of color $\mu$ for the edge $xy$ will result in a valid coloring for $G$, a contradiction. The coloring $g_2$ is not valid only if it creates a $(\kappa, \nu)$-bichromatic cycle passing through the vertices $y'$ and $y''$ (See Figure~\ref{fig:color_g2}). Therefore, we can assume that the last edge incident on $y''$ is colored with color $\nu$ in coloring $g$. Effectively, the colors seen at the vertex $y''$ in $g$ are $\kappa$, $\alpha$ and $\nu$.
			
		Now we attempt another recoloring $g_3$ of $g$: The edges $yy_1$, $yy'$ and $yy''$ are recolored with the colors $\kappa$, $\alpha$ and $\gamma$ respectively (See Figure~\ref{fig:color_g3}). Since $\mu,\nu \notin F_{xy}$, recoloring $yy_1$ with $\kappa$ doesn't produce any bichromatic cycles. Further since $\alpha \notin F_{yy_1}$, recoloring $yy'$ with $\alpha$ doesn't produce an $(\alpha,\kappa)$-bichromatic cycle. Since $\nu \notin F_{y''y}$ and $\gamma \notin F_{yy'}$, recoloring $yy''$ with $\gamma$ doesn't produce any bichromatic cycles. Hence, $g_3$ is a valid coloring for $G'$. Since $\nu \notin F_{yy'}$, we can get a valid coloring $f$ of $G$ from $g_3$ of $G'$ by assigning the color $\nu$ to the edge $xy$, a contradiction.

		\begin{subcase}\label{subcase:3sparseOneIntersection_3}
			No vertex in $N(y) \setminus y_1$ has an edge incident on it colored $\mu$ or $\nu$.
		\end{subcase}
		
		Consider any vertex $y' \in N_{G'}(y) \setminus y_1$. Let $g(yy') = \gamma$ for some $\gamma \in F_{xy}$. Let the other two edges incident on $y'$ (if present) have colors $\eta$ and $\zeta$. Now, consider the following color exchange $g'$: $g'(yy_1) = \gamma$ and $g'(yy') = \mu$. The only reason $g'$ fails to be valid is if it created a $(\mu, \eta)$ or a $(\mu, \zeta)$-bichromatic cycle that passes through some edge $yy''$ colored either $\eta$ or $\zeta$. But $y''$ can not have the colors $\mu$ or $\nu$ on the other edges incident on it by our assumption. Therefore the recoloring $g'$ must be valid. Assign $\nu$ to the edge $xy$ to get a valid coloring $f$ of $G$ from $g'$ of $G'$, a contradiction.
		
		
		\begin{case}\label{case:3SparseTwoIntersections}
			$|F_{xy} \cap F_{yx}| = 2$.
		\end{case}
		
		Let $y_1, y_2 \in N(y)$ with $g(yy_1) = \alpha$ and $g(yy_2) = \beta$. Let $y'_1,y''_1$ be the neighbors of $y_1$ and $y'_2,y''_2$ be the neighbors of $y_2$ other than $y$. Clearly, all the colors in $S$ must have been assigned to some edge in the set $R$ = $\{y_1y'_1, y_1y''_1, y_2y'_2, y_2y''_2\}$. If not, such a color is free for assignment to the edge $xy$ to get a valid coloring $f$ from $g$, a contradiction. Since $|F_{xy} \cap F_{yx}| = 2$, we have $|F_{xy} \cup F_{yx}| \leq t-2$ implying that $|S| \ge 3$. Let $\gamma,\eta,\mu \in S$. Hence, at least three edges in $R$ are assigned colors from $S$.
		
		Let $\nu$ be the color used for the fourth edge in $R$ and it can belong to either $S$ or $F_{xy}$. Without loss of generality, let $\gamma$ and $\eta$ be the colors assigned to edges $y_1y'_1$ and $y_1y''_1$ in coloring $g$ and let $\mu$ and $\nu$ be the colors assigned to $y_2y'_2$ and $y_2y''_2$. Observe that $\nu$ can be same as $\gamma$ or $\eta$ but not $\mu$, by the proper coloring restriction. Now, we recolor the edge $yy_1$ with color $\mu$. Since $\gamma,\eta \in S$, we have $\gamma,\eta \notin F_{xy}$. Hence, no bichromatic cycle is created by the recoloring implying that the resulting coloring is valid. If the $d(y_2)=2$ (and therefore $\nu$ is absent in $F_{yy_2}$) or if $\nu \in F_{xy}$, then any color in $\{\gamma, \eta\}$ can be assigned to the edge $xy$ to extend the coloring to get a valid coloring $f$ of $G$, a contradiction. Otherwise, we have $\nu \in S$. Without loss of generality, let $\nu = \gamma$. Then we can assign $\eta$ to the edge $xy$ to get a valid coloring $f$ of $G$, again a contradiction to the fact that $G$ is a minimum counterexample.
		
		In any case, we arrive at a contradiction implying that a minimum counterexample to Theorem~\ref{thm:ACI3sprs} does not exist. Therefore, Theorem~\ref{thm:ACI3sprs} holds and Corollary~\ref{cor:ACI3sprs} follows from the same.
	\end{proof}
	\section{Conclusion}
        We conclude our discussion on the acyclic chromatic index of $3$-sparse graphs by reiterating Theorem~\ref{thm:ACI3sprs} and Corollary~\ref{cor:ACI3sprs}. Any $3$-sparse graph $G$ can be acyclically edge colored with $\Delta+2$ colors. Further, if $G$ has an edge $uv$ with $d_G(u) + d_G(v) < \Delta + 3$, then there exists an acyclic edge coloring of $G$ with at most $\Delta +1$ colors. The acyclic edge coloring conjecture gives an upper bound of $\Delta+2$ for any arbitrary graph $G$. Hence, one can try to prove the conjecture which constitutes a nice long term research problem. More feasible directions would be to study acyclic edge coloring for bipartite graphs or highly structured class of complete graphs for which the problem is open. Looking for an efficient algorithm for the acyclic edge coloring of these graph classes may add value to the existing structural results in the literature.
	
	\bibliographystyle{SK}
	\bibliography{Acyclic3sparse}
	
\end{document}